\documentclass[12pt]{article}

\usepackage[top=20truemm,bottom=20truemm,left=22truemm,right=22truemm]{geometry}
\usepackage{amsthm}
\usepackage{amssymb}
\usepackage{amsmath}
\usepackage{multicol}
\usepackage{color}
\usepackage{graphicx}
\usepackage{amscd}
\usepackage[all]{xy}
\usepackage{comment}
\usepackage{marvosym}
\usepackage{tikz}
\usetikzlibrary{cd}
\usepackage{tasks}

\usepackage{color, hyperref}
\hypersetup{
    colorlinks=false,
    citebordercolor=green,
    linkbordercolor=red,
    urlbordercolor=cyan,
}

\theoremstyle{plain}
\newtheorem{theorem}{Theorem}[section]

\newtheorem{proposition}[theorem]{Proposition}
\newtheorem{lemma}[theorem]{Lemma}
\newtheorem{corollary}[theorem]{Corollary}

\newtheorem{claim}[theorem]{Claim}
\newtheorem*{claim*}{Claim}

\theoremstyle{definition}
\newtheorem{definition}[theorem]{Definition}

\newtheorem{example}[theorem]{Example}

\newtheorem*{setup*}{Setup}

\theoremstyle{remark}
\newtheorem{remark}[theorem]{Remark}

\newcommand{\Spec}{\mathrm{Spec}}

\newcommand{\sht}{\mathrm{ht}^{\mathrm{s}}}

\newcommand{\Hcr}{H_{\mathrm{cris}}}

\newcommand{\Hcris}{H_{cris}}

\newcommand{\F}{\mathbb{F}}

\renewcommand{\P}{\mathbb{P}}
\newcommand{\Q}{\mathbb{Q}}

\newcommand{\Z}{\mathbb{Z}}

\renewcommand{\O}{\mathcal{O}}

\newcommand{\cHom}{\mathcal{H}om}

\title{Quasi-F-split and Hodge-Witt}
\author{Fuetaro Yobuko}
\date{}

\begin{document}

\maketitle

\section{Introduction}

Let $p$ be a prime number.
This paper treats positive characteristic $p>0$ algebraic geometry.
Quasi-$F$-splitting is an extension of the notion of \textit{$F$-splitting}, introduced by Mehta-Ramanathan \cite{mehta-ramnathan}.
Quasi-$F$-splitting is introduced in \cite{yobuko19} and recently studied in \cite{kttwyy1} and \cite{kttwyy2} from the point of view of birational geometry.
This notion is influenced by the \textit{Artin-Mazur height} of Calabi-Yau varieties, which takes values in positive integers or infinity.
It is known that the Artin-Mazur height one is equivalent to $F$-splitting and such a variety is called ordinary at least its dimension $\leq 2$.
In \cite{yobuko19}, the author shows that the Artin-Mazur height being finite is equivalent to quasi-$F$-splitting for Calabi-Yau varieties.

In this note, we investigate a theory of (quasi-)$F$-splitting in the form of
\begin{align*}
    \text{$F$-split}:\text{quasi-$F$-split}=\text{ordinary}:\text{Hodge-Witt}.
\end{align*}
Here, ordinary means \textit{ordinary in the sense of Bloch-Kato}.
See \ref{ordinay and hodge-witt} for the definition of ordinarity and Hodge-Wittness. 
A K3 surface is Hodge-Witt if and only if its Artin-Mazur height is finite, hence is quasi-$F$-split.

First of all, we have to notice that the scope of the (quasi-)$F$-splitting and ordinary/Hodge-Wittness is quite different.
Ordinary/Hodge-Wittness is defined for smooth proper schemes over a perfect field $k$ of characteristic $p>0$ in terms of the de Rham-Witt complex.
Within smooth proper schemes, “generic” one is ordinary.
For example, a generic smooth complete intersection in a projective space is ordinary(\cite{illusie-ordinary}). 

On the other hand, (quasi-)$F$-splitting is defined for $\F_p$-schemes, without smoothness or properness assumptions.
In fact, $F$-splitting has been studied in the theory of singularities or commutative algebras and nowadays it is a key notion in the theory of $F$-singularities. 
In this context, it is also called \textit{$F$-pure}.
Also, when a smooth projective scheme is quasi-$F$-split, it has non-positive Kodaira dimension (\cite[Proposition 3.14]{kttwyy1}).

Despite these differences, we investigate an analogy between quasi-$F$-split/$F$-split and Hodge-Witt/ordinary.
Besides the K3 case mentioned as above, we find that abelian varieties supports this analogy:
\begin{theorem}[Theorem \ref{quasi f split abelian variety}]
    For abelian varieties in positive characteristic, quasi-$F$-split is equivalent to Hodge-Witt and $F$-split is equivalent to ordinary.
\end{theorem}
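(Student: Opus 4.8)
I would treat the two equivalences separately; the equivalence $F\text{-split}\iff\text{ordinary}$ is essentially classical, and the substance of the theorem lies in $\text{quasi-}F\text{-split}\iff\text{Hodge-Witt}$.

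For $F\text{-split}\iff\text{ordinary}$ the plan is to combine three ingredients. First, the standard criterion that a smooth projective variety $X$ with $\omega_X\cong\O_X$ is Frobenius split if and only if the Frobenius acts bijectively on $H^{\dim X}(X,\O_X)$ (dually, that the Cartier operator is nonzero on $H^0(X,\omega_X)\cong k$); note that $\omega_X\cong\O_X$ already forces $\dim_k H^{\dim X}(X,\O_X)=1$. Second, for an abelian variety $A$ of dimension $g$ one has $\omega_A\cong\O_A$ and a Frobenius-compatible isomorphism $H^{\bullet}(A,\O_A)\cong\bigwedge^{\bullet}H^1(A,\O_A)$, so the Frobenius is bijective on $H^g(A,\O_A)$ exactly when it is bijective on $H^1(A,\O_A)$. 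Third, the latter condition is the classical ordinarity of $A$, and for abelian varieties classical ordinarity coincides with ordinarity in the sense of Bloch--Kato (cf. \ref{ordinay and hodge-witt}). Chaining these gives $A$ $F$-split $\iff$ $A$ ordinary.

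For $\text{quasi-}F\text{-split}\iff\text{Hodge-Witt}$ the plan is to show that each side is equivalent to the top Artin--Mazur formal group $\widehat{\Phi}^{g}_{A}$ having finite height --- equivalently, that $H^{g}(A,W\O_A)$ is a finitely generated $W$-module, equivalently that the $g$ smallest slopes $\lambda_1\le\cdots\le\lambda_g$ of $H^1_{\mathrm{cris}}(A/W)$ satisfy $\lambda_1+\cdots+\lambda_g<1$. On the quasi-$F$-split side I would revisit the computation in \cite{yobuko19}: the proof that $\sht(X)$ equals the Artin--Mazur height for Calabi--Yau $X$ uses only $\omega_X\cong\O_X$ together with $\dim_k H^{\dim X}(X,\O_X)=1$, not the intermediate vanishing, so it applies to $A$ and yields $\sht(A)=h(\widehat{\Phi}^{g}_{A})$; hence $A$ is quasi-$F$-split iff $\widehat{\Phi}^{g}_{A}$ has finite height. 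On the Hodge-Witt side I would use Illusie's de Rham--Witt theory and the Illusie--Raynaud criterion that $A$ is Hodge-Witt iff the slope spectral sequence degenerates at $E_1$, iff all the groups $H^j(A,W\Omega^i_A)$ are finitely generated over $W$; using the exterior-algebra structure on $W\Omega^{\bullet}_A$ coming from the group law (the de Rham--Witt lift of $H^{\bullet}_{\mathrm{cris}}(A/W)\cong\bigwedge^{\bullet}H^1_{\mathrm{cris}}(A/W)$) together with Poincaré duality on $W\Omega^{\bullet}_A$, finiteness of all these groups should reduce to finiteness of the single group $H^{g}(A,W\O_A)\cong D(\widehat{\Phi}^{g}_{A})$, which fails precisely when $\lambda_1+\cdots+\lambda_g\ge 1$. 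Matching the two descriptions then gives the equivalence; a few sanity checks: a supersingular elliptic curve is Hodge-Witt and quasi-$F$-split but not ordinary (here $g=1$ and $\lambda_1=\tfrac12<1$), an abelian surface with slopes $(0,\tfrac12,\tfrac12,1)$ is likewise Hodge-Witt and quasi-$F$-split but not ordinary, while a supersingular abelian surface is none of the three.

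The \textbf{main obstacle} is the Hodge-Witt half of the second equivalence: one must show that the Hodge-Wittness of $A$ is governed by the single group $H^g(A,W\O_A)$ and translate this into the slope inequality $\lambda_1+\cdots+\lambda_g<1$, uniformly across all Newton polygons of $H^1_{\mathrm{cris}}(A/W)$ and not merely the ordinary and supersingular extremes; this rests on the multiplicative structure of the de Rham--Witt complex of an abelian variety together with an Illusie--Raynaud-style domino-and-duality computation. A secondary, presumably routine, point is to verify that the Calabi--Yau computation of $\sht$ in \cite{yobuko19} really does go through for $A$. One could instead try to first prove that both properties are isogeny invariants for abelian varieties and then argue one isogeny class at a time, but this does not obviously lighten the slope bookkeeping.
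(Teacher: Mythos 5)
Your decomposition is correct and the $F$-split $\iff$ ordinary half is fine (it matches the standard argument the paper dismisses as ``well known''), but the route you propose for quasi-$F$-split $\iff$ Hodge-Witt is genuinely different from the paper's and, as written, has two unproved steps exactly where the real work lies. First, the claim that the Calabi--Yau computation $\sht(X)=h(\widehat{\Phi}^{\dim X}_X)$ from \cite{yobuko19} ``uses only $\omega_X\cong\O_X$ and $h^{\dim X}(\O_X)=1$'' is asserted, not checked, and it is precisely the point where abelian varieties differ: $H^{g-1}(A,\O_A)\neq 0$ for $g\ge 2$, and the intermediate vanishing is what controls pro-representability of the Artin--Mazur functor and the structure of $H^g(X,W_n\O_X)$ in that proof (indeed here $H^g(A,W_n\O_A)$ has length growing like $n$ by Serre \cite{serre58av}, a behaviour absent in the Calabi--Yau case). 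The paper avoids Artin--Mazur formal groups altogether: for $p$-rank $g-1$ it proves by hand that $H^g(A,W\O_A)$ is torsion-free of rank $2$, writes down the $F,V$-action on an explicit basis, and deduces $\sht(A)\le 2$ from the Serre-duality criterion (Proposition \ref{quasi-f-split criterion}); for $p$-rank $\le g-2$ it uses Nakkajima's theorem (Theorem \ref{nakkajima finite}: quasi-$F$-split $\Rightarrow H^g(A,W\O_A)$ finitely generated) against Serre's length growth. Your proposal never invokes Nakkajima's result, which is the tool that makes the negative direction cheap; without it you are forced to rely on the unverified $\sht=h$ identity even in the infinite-height case.

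Second, on the Hodge--Witt side you yourself flag the main obstacle: reducing finiteness of all $H^j(A,W\Omega^i_A)$ to finiteness of the single group $H^g(A,W\O_A)$ via a ``domino-and-duality computation''. That reduction is essentially a reproof of the result the paper simply quotes, namely \cite[Corollary 6.3.16]{illusie-ekedahl}: an abelian variety is Hodge--Witt iff its $p$-rank is $g$ or $g-1$. Once that criterion is used, the whole theorem reduces to the $p$-rank trichotomy $\sht(A)\in\{1,2,\infty\}$ of Theorem \ref{quasi f split abelian variety}, and your slope bookkeeping ($\lambda_1+\cdots+\lambda_g<1$ iff $f(A)\ge g-1$), while correct, becomes unnecessary. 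So the proposal is not wrong in its stated equivalences, but both of its central bridges (the extension of \cite{yobuko19} to abelian varieties, and Hodge--Wittness being governed by $H^g(A,W\O_A)$ alone) are left as plausibility claims; filling them in would require either the explicit Dieudonn\'e-module analysis the paper performs or the Ekedahl/Illusie--Raynaud machinery it cites.
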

\noindent
Remark that some rational varieties and Enriques surfaces break such a coincidence (see \S \ref{nonexamples}).

What surprises us is a similarity of the behaviour of these notions under taking products.
Recall that the following theorem due to Ekedahl:
\begin{theorem}\cite[p.91 Proposition 2.1, p.97 Proposition 7.2]{ekedahl-multiplicative-ii}\label{ordinary hodge witt}
Let $X$ and $Y$ be smooth proper schemes over a perfect field $k$ of characteristic $p>0$.
Then we have the following;
\begin{enumerate}
    \item if $X$ is ordinary and $Y$ is Hodge-Witt, then $X\times Y$ is Hodge-Witt,
    \item if $X\times Y$ is Hodge-Witt, then one of the factors is ordinary and the other is Hodge-Witt.
\end{enumerate}
\end{theorem}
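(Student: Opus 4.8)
The plan is to deduce both statements from a Künneth formula for de Rham-Witt cohomology, read against the local structure theory of the de Rham-Witt complex over the Raynaud ring. Write $R=W\langle F,V,d\rangle$ for the (graded) Raynaud ring and regard $R\Gamma(X,W\Omega^\bullet_X)$, $R\Gamma(Y,W\Omega^\bullet_Y)$ as objects of the derived category $D(R)$ of graded $R$-modules. Since $X$ and $Y$ are smooth and proper we may first pass to $k=\bar k$, as neither Hodge-Wittness nor ordinarity is affected. The multiplicative structure of the de Rham-Witt complex then gives, at the level of complexes of sheaves carrying their operators $F,V,d$, an identification of the shape $W\Omega^\bullet_{X\times Y}\simeq W\Omega^\bullet_X\,\widehat{\otimes}_W\,W\Omega^\bullet_Y$ (external, $V$-adically completed), and taking cohomology yields a Künneth isomorphism
$$R\Gamma\bigl(X\times Y,W\Omega^\bullet_{X\times Y}\bigr)\;\simeq\;R\Gamma(X,W\Omega^\bullet_X)\,\otimes^{\mathbb L}_R\,R\Gamma(Y,W\Omega^\bullet_Y)$$
in $D(R)$, with the derived tensor product over $R$ suitably completed. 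This is the technical backbone; properness is what keeps all the terms behaving coherently in the relevant sense.

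Next I reformulate the two hypotheses against this object, after which assertion (1) is immediate. Recall that $X$ is Hodge-Witt precisely when each $H^j(X,W\Omega^i_X)$ is a finitely generated $W$-module, equivalently (Illusie-Raynaud) when the slope spectral sequence $E_1^{ij}=H^j(X,W\Omega^i_X)\Rightarrow\Hcr^{i+j}(X/W)$ degenerates at $E_1$ compatibly with torsion, i.e.\ all of its dominoes vanish. On the other hand $X$ is ordinary exactly when $H^j(X,B\Omega^i_X)=0$ for all $i,j$ (with $B\Omega^i_X=d\,\Omega^{i-1}_X\subseteq\Omega^i_X$), and for such $X$ one has more: $R\Gamma(X,W\Omega^\bullet_X)$ is, in $D(R)$, a direct sum $\bigoplus_i R\Gamma(X,W\Omega^i_X)[-i]$ of slope pieces, each a perfect $W$-complex on which the de Rham-Witt Frobenius acts bijectively (so as an $F$-crystal it is pure of slope $i$, of unit-root type) — concretely, induced from $\mathbb Z_p$-coefficients, the logarithmic de Rham-Witt sheaves, by $-\otimes_{\mathbb Z_p}W$ — so that it carries no $V$-torsion and no ``slope exceeding index'' part. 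For (1) I then substitute this split form into the Künneth formula: the completed derived tensor product with each such $V$-torsion-free unit-root slope-$i$ piece reduces, slope by slope, to a shift and Tate twist of $-\otimes^{\mathbb L}_W R\Gamma(Y,W\Omega^\bullet_Y)$, which introduces no new $V$-torsion and no new dominoes, so every $H^j(X\times Y,W\Omega^i_{X\times Y})$ stays finitely generated over $W$; that is, $X\times Y$ is Hodge-Witt.

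For assertion (2) I argue by contraposition. First, Hodge-Wittness descends to the factors: picking a $k$-point $y\in Y(k)$, the retraction $X\xrightarrow{\sim}X\times\{y\}\hookrightarrow X\times Y\xrightarrow{\mathrm{pr}_1}X$ exhibits each $H^j(X,W\Omega^i_X)$ as a direct summand of $H^j(X\times Y,W\Omega^i_{X\times Y})$ through the Künneth formula, so if $X\times Y$ is Hodge-Witt then both $X$ and $Y$ are. It therefore suffices to rule out that $X$ and $Y$ are both Hodge-Witt but neither is ordinary. Non-ordinarity of $X$ means $H^{j_0}(X,B\Omega^{i_0}_X)\neq0$ for some $(i_0,j_0)$, which forces a finitely generated but non-unit-root contribution to $R\Gamma(X,W\Omega^\bullet_X)$ — a summand of $H^{j_0}(X,W\Omega^{i_0}_X)$, or an adjacent piece, of slope strictly greater than its index, visible in the slope spectral sequence — and likewise for $Y$ at some $(i_1,j_1)$. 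Feeding these two local defects through the Künneth formula, their completed tensor product over $R$ is precisely the configuration that creates a domino of positive dimension in some $H^\bullet(X\times Y,W\Omega^\bullet_{X\times Y})$, hence an infinitely generated $W$-module, so $X\times Y$ is not Hodge-Witt.

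The main obstacle is the pair of technical cores behind this outline. The first is the Künneth isomorphism in $D(R)$ with the correct \emph{completed} tensor product, together with honest coherence and finiteness bookkeeping: the subtlety is that Hodge-Wittness is finite generation of the \emph{individual} groups $H^j(W\Omega^i)$, not of crystalline cohomology, so one genuinely has to work over the Raynaud ring and track $V$-torsion rather than reasoning with $F$-crystals alone. The second is the local computation in the last step — that the $R$-tensor product of two ``slope exceeds index'' local $F$-$V$-structures necessarily carries a nonzero domino; this is the heart of Ekedahl's argument in \cite{ekedahl-multiplicative-ii}, and it is exactly there that the positivity of the defect of \emph{both} factors is used.
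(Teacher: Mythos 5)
You are trying to prove a statement that the paper itself does not prove: Theorem \ref{ordinary hodge witt} is quoted from Ekedahl \cite{ekedahl-multiplicative-ii}, and the introduction explicitly attributes the general case to his K\"unneth theory for the de Rham--Witt complex. Your outline follows exactly that route (K\"unneth over the Raynaud ring $R$, then the structure theory of dominoes), so the strategy is the right one; but as a proof it has a genuine gap, and you name it yourself: the completed K\"unneth isomorphism in $D(R)$ and the local computation that the $R$-tensor product of two non-ordinary Hodge--Witt ``defects'' forces a nonzero domino are not bookkeeping to be deferred --- they \emph{are} the content of Ekedahl's Propositions 2.1 and 7.2, and your text ends by citing \cite{ekedahl-multiplicative-ii} for precisely these steps. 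So nothing beyond the reduction ``Hodge--Witt passes to factors via a point section'' is actually established.

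In addition, two intermediate claims are false as stated and would derail the argument even granting the K\"unneth formula. First, for ordinary $X$ the slope pieces need not be $V$-torsion free or of unit-root type integrally: a classical Enriques surface in characteristic $2$ is ordinary (see the table in \S\ref{nonexamples}) yet has $p$-torsion classes in $H^j(X,W\Omega^i_X)$; on such a class $F$ is bijective and $FV=p=0$, so $V$ kills it --- genuine $V$-torsion. Hence the step in (1) ``tensoring with a $V$-torsion-free unit-root piece introduces no new dominoes'' does not apply literally and must be replaced by Ekedahl's torsion-aware analysis of coherent $R$-modules. Second, in (2) the ``defect'' of a non-ordinary Hodge--Witt factor need not appear as a free part of slope exceeding its index: for a supersingular Enriques surface in characteristic $2$ the failure of ordinarity lives entirely in torsion ($F=0$ on $H^2(X,W\mathcal{O}_X)\simeq k$, while all groups are finitely generated). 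The correct hypothesis to feed into the product computation is only that $F$ fails to be bijective on some finitely generated $H^{j}(X,W\Omega^{i}_X)$ for each factor, and showing that this alone produces an infinitely generated group (a nonzero domino) upstairs is exactly the missing heart of the proof.
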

Note that this type of phenomena (especially that of (2)) are observed from the very beginning of the invention of $p$-adic cohomology theory \cite{Serre58}\cite{serre58av}:
An ordinary elliptic curve (i.e., its $p$-rank is one) is ordinary in the sense of Block-Kato and all smooth projective curve is Hodge-Witt.
Serre shows that the self product of a supersingular elliptic curve is not Hodge-Witt \cite[\S1, Corollaire]{serre58av}.
N. Katz revealed that this phenomena is ubiquitous by showing the above theorem when the crystalline cohomologies of $X$ and $Y$ are torsion free \cite{katz1983}.
Then Ekedahl proves the theorem in general using his K\"unneth theory for de Rham-Witt complex.
We will show that an analogous theorem holds between $F$-split schemes and quasi-$F$-split schemes:
\begin{theorem}[Theorem \ref{fsplit-times-quasifsplit}, Theorem \ref{double-nonfsplit-nonquasifsplit}]\label{f-split times quasi-f-split}
    Let $k$ be a perfect field of characteristic $p>0$.
    Let $X$ and $Y$ be $k$-schemes.
    Then the following assertions hold:
    \begin{enumerate}
        \item If $X$ is $F$-split and $Y$ is quasi-$F$-split, then $X \times_k Y$ is quasi-$F$-split.
        \item 
        Assume that $X$ and $Y$ satisfy one of the following conditions;
        \begin{enumerate}
        \item $X=\Spec(A)$ and $Y=\Spec(B)$ where $A$ and $B$ are $F$-finite noetherian $k$-algebras,
        \item $X$ and $Y$ are geometrically connected proper schemes over $k$.
        \end{enumerate}
        If $X \times Y$ is quasi-$F$-split, then one of the factors is $F$-split and the other is quasi-$F$-split.
    \end{enumerate}
\end{theorem}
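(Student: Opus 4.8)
The plan is to recast both statements via the Yobuko height $\sht(-)$, for which $\sht(Z)=1$ means $Z$ is $F$-split and $\sht(Z)<\infty$ means $Z$ is quasi-$F$-split. Then (1) asserts that $\sht(X)=1$ and $\sht(Y)<\infty$ imply $\sht(X\times_k Y)<\infty$ --- one may even hope for the sharper bound $\sht(X\times_k Y)\le\sht(X)+\sht(Y)-1$, which contains (1) --- while (2) asserts that, under (a) or (b), finiteness of $\sht(X\times_k Y)$ forces $\min(\sht(X),\sht(Y))=1$ and $\max(\sht(X),\sht(Y))<\infty$.

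For (1) I would construct the required splitting by hand at a fixed Witt level. Recall that $\sht(Z)\le n$ is equivalent to the splitting, as $\O_Z$-modules, of a canonical exact sequence $0\to\O_Z\xrightarrow{\iota_{Z,n}}\mathcal Q_{Z,n}\to\mathcal B_{Z,n}\to 0$, in which $\mathcal Q_{Z,1}=F_*\O_Z$ and the $\mathcal Q_{Z,n}$ are built functorially from the Witt sheaves $W_n\O_Z$ and the Witt-Frobenius, forming a tower above $\mathcal Q_{Z,1}$. The obstacle is that Witt vectors do not commute with $\otimes_k$, so there is no naive K\"unneth isomorphism for the $\mathcal Q$'s, only a comparison morphism built from the natural maps of $W_n\O_X$ and $W_n\O_Y$ into $W_n\O_{X\times_k Y}$; this is exactly the phenomenon governing Ekedahl's K\"unneth theory. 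The point is that an $F$-splitting of $X$ propagates --- compatibly with the restriction maps --- to Frobenius splittings of all the $W_n\O_X$, and this is where perfectness of $k$ (through $W_n(k)$) is used; these splittings are precisely what is needed to absorb the correction terms in the comparison morphism and to transport the given splitting $\rho_Y$ of $\iota_{Y,n}$ across it to a splitting of $\iota_{X\times_k Y,n}$. Concretely I would (i) produce, from $\sigma_X\colon F_*\O_X\to\O_X$, compatible retractions on the $W_n\O_X$-side at every level; (ii) combine them with the pullback of $\rho_Y$; (iii) check that the composite retracts $\iota_{X\times_k Y,n}$. In the proper case one replaces $\Hom$ by $H^0$ of the coherent sheaf $\cHom(\mathcal Q_{X\times_k Y,n},\O)$ and uses the K\"unneth formula for coherent cohomology; the affine parts are local and require no finiteness hypothesis.

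For (2) there are two things to prove. That each factor is itself quasi-$F$-split is the easier one, and follows from the standard stability of $\sht$ under localization, separable base field extension and retracts (note that over the perfect field $k$ every residue field is separable): in case (a) one restricts $X\times_k Y$ to the localization at a generic point of $X$, identifying it with $Y$ base-changed to a residue field of $X$, so that $\sht(Y)\le\sht(X\times_k Y)$, and symmetrically; in case (b), since $\sht$ is unchanged by $-\otimes_k\bar k$ (again because $\bar k/k$ is separable and faithfully flat), one reduces to $k=\bar k$, where the proper scheme $X$ has a $\bar k$-point and $Y$ is literally a retract of $X\times_k Y$. The substantial assertion is that if neither factor is $F$-split --- that is, $\sht(X)\ge 2$ and $\sht(Y)\ge 2$ --- then $\sht(X\times_k Y)=\infty$. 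This is the $F$-split avatar of Serre's observation that the square of a supersingular elliptic curve fails to be Hodge-Witt, and, as in Ekedahl's proof of Theorem \ref{ordinary hodge witt}, it is the heart of the matter. I would phrase it through the obstruction class: $\sht(Z)>n$ is equivalent to non-vanishing of the class $e_{Z,n}\in\Ext^1_{\O_Z}(\mathcal B_{Z,n},\O_Z)$ of the sequence above, and the $e_{Z,n}$ form a compatible system along the tower. Since all the sheaves in play are coherent over the field $k$ (in case (a) because $A$ and $B$ are $F$-finite, in case (b) because a proper scheme over the perfect field $k$ is $F$-finite), there is a K\"unneth description of these $\Ext$-groups expressing $e_{X\times_k Y,n}$ in terms of the $e_{X,\bullet}$, the $e_{Y,\bullet}$ and cross-terms built from the Frobenius-twisted sheaves on the two factors; the content is then a combinatorial estimate showing that this class can vanish for some $n$ only if one of $e_{X,\bullet}$, $e_{Y,\bullet}$ already vanishes at level $n=1$, i.e. only if one of the factors is $F$-split.

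I expect this last step --- making the K\"unneth analysis of the $\mathcal Q_{-,n}$-tower precise and extracting from it the numerical statement ``the product has finite Yobuko height if and only if one factor has height one'' --- to be the main obstacle, in close parallel with the role played by the K\"unneth theory for the de Rham-Witt complex, and the combinatorics of dominoes, in Ekedahl's argument. The only genuine subtlety in (1), by comparison, is the bookkeeping needed to verify that an $F$-splitting of one factor survives to every Witt level in a form compatible with the comparison morphism.
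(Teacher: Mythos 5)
Your proposal is a plan rather than a proof, and in both halves the step you defer is exactly the step that carries the content. For (1), you correctly identify that Witt vectors do not commute with $\otimes_k$, but you then only promise that the $F$-splitting of $X$ will ``absorb the correction terms in the comparison morphism'' without saying what that comparison morphism is or why a splitting can be defined on all of $F_*\overline{W}_n\O_{X\times_k Y}$ (the splitting must be $\O_{X\times Y}$-linear on sections that are not sums of products of Witt vectors pulled back from the factors). The paper resolves precisely this point by invoking the Antieau--Nikolaus presentation $W(A)\widehat{\boxtimes}_W W(B)\simeq W(A\otimes_k B)$ (Theorem \ref{monoidal structure on witt}): the product's Witt vectors are generated by symbols $(a\otimes b)V^k$ modulo the relations $(a\otimes Vb)V^k\sim(Fa\otimes b)V^{k+1}$, $(Va\otimes b)V^k\sim(a\otimes Fb)V^{k+1}$, and the quasi-splitting is then written down explicitly as $\sigma((a\otimes b)V^k)=\sigma_A^{k+1}(Ra)\otimes\sigma_B(V^kb)$ and checked against those relations (this is where the $F$-splitting of $X$ is used, since $\sigma_A$ must be iterated). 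Without this presentation, or some substitute such as the Fedder-type criterion of Kawakami--Takamatsu--Yoshikawa, your steps (i)--(iii) have no concrete target to map from, so (1) is not established.

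For (2), the easy half (each factor inherits quasi-$F$-splitting) is fine and essentially matches the paper, which uses that $\O_X\to pr_{1*}\O_{X\times Y}$ splits. But for the substantial half --- neither factor $F$-split implies the product is not quasi-$F$-split --- you propose a K\"unneth analysis of obstruction classes $e_{Z,n}\in\Ext^1(\mathcal B_{Z,n},\O_Z)$ together with a ``combinatorial estimate,'' and you explicitly flag that step as the main obstacle you have not carried out; as written this is a gap, not an argument. The paper's proof needs none of that machinery: it rests on the single identity $V(x)V(y)=pV(xy)\equiv 0$ in $\overline{W}_n=W_n/p$. In the affine case, non-$F$-splitness is converted via Hochster--Roberts into non-purity, producing modules $M_A,M_B$ and nonzero elements whose Frobenius images lie in the image of $V$; multiplying them kills the Frobenius image of $x\otimes y$, so $F$ on $M_A\otimes_k M_B$ tensored with $F_*\overline{W}_n(A\otimes_k B)$ is not injective and no splitting can exist. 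In the proper case the same computation is run on C\v{e}ch representatives of $F(\eta_X)\otimes F(\eta_Y)$, where $\eta_X,\eta_Y$ generate the top cohomology of the dualizing sheaves, using the duality criterion of Proposition \ref{quasi-f-split criterion} and the K\"unneth formula. So the decisive idea --- that images of $V$ multiply to zero mod $p$ --- is absent from your proposal, and the route you sketch instead (an Ekedahl-style domino/K\"unneth analysis of the $\mathcal Q_{-,n}$ tower) is both unproved and, as the paper shows, unnecessary.
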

Remark that a similar result is proved by Kawakami-Takamatsu-Yoshikawa using their Fedder type criterion for quasi-$F$-splitting \cite{kty22fedder}.
But the result in this paper holds with fewer assumptions and the proof is quite direct.
The proof of (1) depends on a construction of tensor products of Witt rings due to Antieau-Nikolaus \cite{antieau-nikolaus}.
The proof of (2) is surprisingly simple because it follows from the formula $V(x)V(y)=pV(xy)$ in Witt ring where $V$ denotes the Verschiebung.

As an immediate application of Theorem \ref{f-split times quasi-f-split}(2) to birational geometry, we give the following.
\begin{corollary}[Theorem \ref{fano-and-f-split}]
    For any $p$, there is a klt Fano fourfold over $\overline{\F}_p$ which is not quasi-$F$-split.
\end{corollary}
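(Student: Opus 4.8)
The strategy is to reduce the problem to low dimension and then invoke the K\"unneth-type principle of Theorem \ref{f-split times quasi-f-split}(2): since a product being quasi-$F$-split forces one of its factors to be $F$-split, no product of two non-$F$-split varieties can be quasi-$F$-split. So the plan is to fix, for each prime $p$, a klt del Pezzo surface $S=S_p$ over $\overline{\F}_p$ that is \emph{not} $F$-split, and then take $W:=S\times_{\overline{\F}_p}S$.

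For the input surface I would rely on the known fact that non-$F$-split klt del Pezzo surfaces exist over $\overline{\F}_p$ for every $p$ --- for instance among log del Pezzo surfaces violating Kodaira-type vanishing, or among suitable purely inseparable covers of $\mathbb{P}^2$ --- so this is an appeal to existing examples rather than a computation. With such an $S$ in hand, $W$ is a fourfold, and I would check that it is a klt Fano: one has $\omega_W^{-1}\cong \mathrm{pr}_1^*\omega_S^{-1}\otimes \mathrm{pr}_2^*\omega_S^{-1}$, an external tensor product of ample invertible sheaves, hence ample; and $W$ is klt since being klt is stable under products over a field (a log resolution of $W$ may be taken of the form $\widetilde S\times\widetilde S$, its exceptional locus is a simple normal crossing divisor, and the discrepancies that appear are the pairwise sums of those of $S$, all $>-1$).

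To conclude, I would argue by contradiction: if $W$ were quasi-$F$-split then, since $S$ is a geometrically connected proper scheme over the perfect field $\overline{\F}_p$, case (b) of the hypotheses of Theorem \ref{f-split times quasi-f-split}(2) is satisfied, so that theorem forces one of the two factors of $W=S\times S$ to be $F$-split; but both factors equal $S$, which was chosen non-$F$-split. This contradiction shows that $W$ is a klt Fano fourfold over $\overline{\F}_p$ which is not quasi-$F$-split.

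The one genuine obstacle is the first step: producing a klt del Pezzo surface that is non-$F$-split \emph{simultaneously for every} $p$, rather than only in small characteristics (many natural del Pezzo surfaces become $F$-split once $p$ grows). Granting such a surface, the passage to a fourfold is purely formal, the entire weight of the argument resting on Theorem \ref{f-split times quasi-f-split}(2), whose proof in turn reduces to the Witt-ring identity $V(x)V(y)=pV(xy)$.
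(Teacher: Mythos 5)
Your proposal is essentially the paper's proof: the paper takes a Cascini--Tanaka--Witaszek klt del Pezzo surface that is not $F$-split (this is exactly Theorem \ref{fano-and-f-split}(2), so the ``obstacle'' you flag about existence for every $p$ is already supplied by that cited result), forms its self product, and applies Theorem \ref{double-nonfsplit-nonquasifsplit} to conclude it is not quasi-$F$-split. The only extra point the paper records for the klt Fano property is that $X\times X$ is $\Q$-factorial, citing \cite{boissiere-gabber-serman}, which your discrepancy/product argument covers in a slightly more hands-on way.
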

Note that, Watanabe showed any one-dimensional log Fano pair with standard coefficients is $F$-split when $p>5$ \cite{watanabe91f}.
This result played a key role in the proof of three-dimensional minimal model program in charactersitic $p>5$ by Hacon and Xu \cite{haconxu15}. 
Later, Cascini, Tanaka and Witaszek showed that Watanabe's result does not hold in dimension two \cite{CTW15a}.
They constructed klt del Pezzo surfaces which are not $F$-split in any positive characteristic.
The notion of quasi-$F$-splitting remedied this situation somehow.
In fact, log Fano curves are quasi-$F$-split in any characteristic \cite[Corollary 5.16]{kttwyy1} and log del Pezzo surfaces are quasi-$F$s-split if $p>5$ \cite[Theorem B]{kttwyy1}.
But Theorem \ref{f-split times quasi-f-split}(2) allows us to conclude that the self product of the Cascini-Tanaka-Witaszek's non $F$-split del Pezzo surfaces, which are in fact four-dimensional klt Fano varieties, are not quasi-$F$-split.

The same self product trick and Theorem \ref{f-split times quasi-f-split}(2) give the following:
\begin{corollary}[Theorem \ref{dense-quasi-f-pure}, Theorem \ref{quasi-f-split-hilbert}]
\begin{enumerate}
    \item Dense quasi-$F$-pure conjecture (see \S \ref{dense-quasi-f-pure-section} for its definition) implies the dense $F$-pure conjecture.
    \item Let $X$ be a smooth projective scheme over a perfect field $k$.
        Assume that $X$ is not $F$-split, then the Hilbert scheme of points of $X$ is not quasi-$F$-split.
\end{enumerate}    
\end{corollary}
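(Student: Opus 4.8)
The plan for \textbf{(1)} is to run the self-product trick inside arithmetic families. Let $\mathcal{C}$ be the class of objects (varieties, or pairs) to which the dense $F$-pure and dense quasi-$F$-pure conjectures of \S\ref{dense-quasi-f-pure-section} apply; the point is that $\mathcal{C}$ is closed under taking finite products over the ground field. Fix $X \in \mathcal{C}$ over a field of characteristic $0$ together with a spreading-out over a finitely generated $\Z$-algebra $R$. Then $X \times X \in \mathcal{C}$, so the dense quasi-$F$-pure conjecture, applied to $X \times X$, says that $\{\, s : X_s \times_{\kappa(s)} X_s \text{ is quasi-}F\text{-split} \,\}$ is a dense set of closed points of $\Spec R$. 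For each such $s$, Theorem~\ref{f-split times quasi-f-split}(2) forces the factor $X_s$ to be $F$-split; hence the $F$-pure locus of the family for $X$ is dense, i.e.\ $X$ has dense $F$-pure type. As $X \in \mathcal{C}$ was arbitrary, this is exactly the dense $F$-pure conjecture. The only things to check are that $\mathcal{C}$ is stable under products and that quasi-$F$-splitting is compatible with these specializations; if $\mathcal{C}$ consists of pairs one uses the boundary version of Theorem~\ref{f-split times quasi-f-split}(2).

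For \textbf{(2)}, arguing by contraposition, we must show: if $\mathrm{Hilb}^n(X)$ is quasi-$F$-split, then $X$ is $F$-split. Let $U \subseteq \mathrm{Hilb}^n(X)$ be the dense open subscheme parametrizing $n$ distinct reduced points. Then $U \cong (X^n \setminus \Delta)/\mathfrak{S}_n$, where $\Delta \subseteq X^n$ is the union of the partial diagonals, and the quotient $q \colon X^n \setminus \Delta \to U$ is finite étale of degree $n!$. Quasi-$F$-splitting passes to open subschemes and is stable under pullback along finite étale morphisms: the Witt sheaves $W_m\O$ and the functor $Q_{-,m}$ whose splitting governs quasi-$F$-splitting commute with étale base change, and a given splitting restricts, resp.\ pulls back, to a splitting. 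Hence $U$, and then $X^n \setminus \Delta$, are quasi-$F$-split.

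Now assume $\dim X \ge 2$, so that $\Delta$ has codimension $\ge 2$ in the smooth, hence normal, variety $X^n$. I claim quasi-$F$-splitting is insensitive to removing a closed subset of codimension $\ge 2$ from a normal variety: the sheaf $\cHom_{\O}(Q_{X^n,m},\O_{X^n})$ is torsion-free and satisfies Serre's condition $S_2$, so a splitting of $\O \to Q_{X^n,m}$ over the big open extends over $X^n$ by the Hartogs principle, and the extension is again a splitting (this can be verified at the generic point). Therefore $X^n$ is quasi-$F$-split. Applying Theorem~\ref{f-split times quasi-f-split}(2) to $X^n = X \times_k X^{n-1}$ and iterating — at each stage one factor, necessarily again a power of $X$, is $F$-split, and the procedure bottoms out at $X^2 = X \times X$ — we conclude that $X$ is $F$-split. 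In particular the self-product trick once more yields that the punctual Hilbert schemes of the Cascini--Tanaka--Witaszek non-$F$-split del Pezzo surfaces are not quasi-$F$-split.

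The step I expect to be the main obstacle is the passage from ``$X^n \setminus \Delta$ quasi-$F$-split'' to ``$X^n$ quasi-$F$-split'', i.e.\ establishing the Hartogs-type extension principle for quasi-$F$-splitting used above; it is also what forces the hypothesis $\dim X \ge 2$. When $\dim X = 1$ one has $\mathrm{Hilb}^n(X) = \mathrm{Sym}^n(X)$ and $\Delta \subseteq X^n$ is a \emph{divisor}, so this step is unavailable, and since removing points from a curve only produces affine — hence $F$-split — opens, no localization argument replaces it; the curve case must instead be attacked by exploiting properness globally, e.g.\ through the Albanese morphism $\mathrm{Sym}^n(X) \to \mathrm{Jac}(X)$, which is a projective-space bundle for $n \gg 0$, together with Theorem~\ref{quasi f split abelian variety} and the Ekedahl product theorem~\ref{ordinary hodge witt}. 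Besides this, the proof requires isolating as lemmas the functorial properties of $Q_{-,m}$ invoked above — compatibility with open immersions, with finite étale base change, and with projective bundles — which are formal but should be stated explicitly.
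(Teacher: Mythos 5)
Your proposal is correct and follows the paper's strategy in substance: part (1) is exactly the intended self-product trick (apply the dense quasi-$F$-pure hypothesis to $X\times X$ and invoke Theorem \ref{factors of quasi f split}(2) for the $F$-finite affine reductions; the paper itself leaves this one-line argument implicit, and you rightly flag the only nontrivial input, namely that the class of log canonical singularities is stable under products), and part (2) is the same localization argument on $X^n$ followed by Theorem \ref{factors of quasi f split}. The one genuine difference is at the start of (2): the paper first descends the quasi-$F$-splitting from $X^{[n]}$ to the whole symmetric product $X^{(n)}$ along the Hilbert--Chow morphism, using $f_*\O_{X^{[n]}}=\O_{X^{(n)}}$ (projective birational onto a normal variety with quotient singularities) together with \cite[Proposition 3.4(1)]{yobuko20}, and only then pulls back along the \'etale locus of $X^n\to X^{(n)}$; you instead restrict to the open subscheme of $X^{[n]}$ parametrizing reduced length-$n$ subschemes, which is isomorphic to $(X^n\setminus\Delta)/\mathfrak{S}_n$, and pull back along the finite \'etale quotient. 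This is a legitimate shortcut (it trades the descent lemma for the restriction-to-opens property) and lands in the same place: quasi-$F$-splitting of $X^n\setminus\Delta$, then extension across the codimension $\ge 2$ diagonals, then the product theorem. Your explicit Hartogs justification (reflexivity of $\cHom(F_*\overline{W}_n\O,\O)$ on the smooth $X^n$) spells out what the paper compresses into one sentence, and your observation that the argument needs $\dim X\ge 2$ matches the paper, whose proof likewise invokes ``codimension of $U$ equal to $\dim X\ge 2$'' even though the statement does not display this hypothesis; the curve case is indeed not covered by either argument, and your closing speculation about handling it via the Albanese/projective-bundle structure is not needed for the statement as the paper actually proves it.
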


The first statement is an analogue of Joshi's theorem \cite{joshi06hodgewittreduction} on Hodge-Witt reduction of smooth projective varieties over a number field.

The Kumar-Thomsen's theorem says the Hilbert scheme of points of an $F$-split smooth projective surface is also $F$-split \cite{kumarthomsen}.
The second statement says a quasi-$F$-split analogue of Kumar-Thomsen's theorem does not hold.

Throughout this paper, $k$ denotes a perfect field of characteristic $p>0$ and $W$ is the ring of Witt vectors of $k$ and $\sigma$ is the Frobenius of $W$.
For a scheme $X$ of finite type over $k$, we denote by $\omega_X$ the dualizing sheaf of $X$.


\subsection*{Acknowledgement}
The author thanks to Hiromu Tanaka and Jakub Witaszek for kindly teaching him birational geometry.
He is also grateful to Tatsuro Kawakami and Teppei Takamatsu and Sho Yoshikawa for sending him the preprint \cite{kty22fedder}.
He was supported by JSPS KAKENHI Grant number JP19K14501.

\section{Preliminaries}
In this section, we recall what are Hodge-Witt and ordinary schemes and their relation to (quasi-)$F$-splitting.

Let $X$ be a smooth proper scheme over $k$.
We have the associated crystalline cohomology $\Hcris^*(X/W)$, which is a finitely generated $W$-module.
Also we have the de Rham-Witt complex $W\Omega_X^{\bullet}$, which computes the crystalline cohomology \cite{illusie_de_rham_witt}.
Each degree of the de Rham-Witt complex has two operators
\[
F \colon W\Omega_X^i \to W\Omega_X^i, \\
V \colon W\Omega_X^i \to W\Omega_X^i
\]
satisfying $FV=p=VF$ and $FdV=d$.
Furthermore, each terms is a projective limit $W\Omega_X^i=\lim_nW_n\Omega_X^i$ along the morphism $R \colon W_{n+1}\Omega_X^i \to W_n\Omega_X^i$.
\begin{definition}\label{ordinay and hodge-witt}
    Let $X$ be a smooth proper scheme over $k$.
    We say $X$ is \textit{ordinary} if the Frobenius action $F$ on each cohomology group $H^j(X, W\Omega_X^i)$ is bijective for any $i, j$.
    We say $X$ is \textit{Hodge-Witt} if each $H^j(X, W\Omega_X^i)$ is finitely generated as a $W$-module.
\end{definition}
A good survey of de Rham-Witt cohomology including these notions and Theorem \ref{ordinary hodge witt} is \cite{illusie-ekedahl}.
\begin{remark}
    \begin{enumerate}
        \item If $X$ is ordinary, then it is Hodge-Witt (c.f. Lemma \ref{F cobounded implies  finiteness}).
        \item A generic smooth complete intersection in a projective space of any degree is ordinary.
        \cite{illusie-ordinary}
    \end{enumerate}
\end{remark}

\begin{example}
Ordinary elliptic curves(i.e., those have a non-trivial $p$-torsion $\Bar{k}$-point) are ordinary. 
Supersingular elliptic curves are not ordinary but Hodge-Witt.
The product of two supersingular elliptic curves is not Hodge-Witt.
\end{example}

Let $X$ be an $\F_p$-scheme.
The Frobenius morphism $F \colon W_n\O_X \to F_*W_n\O_X$ induces 
\[
\begin{tikzcd}
    W_n\O_X \arrow{r}{F} \arrow[d, twoheadrightarrow, "R^{n-1}"'] & F_*W_n\O_X \arrow[d, twoheadrightarrow] \\
\O_X \arrow{r}{F} & F_*(W_n\O_X/p)
\end{tikzcd}
\]
where the lower $F$ is given by $F(x)= [x]^p ~\text{mod}~ p$ for $x \in \O_X$.
Here $[-] \colon \O_X \to W_n\O_X$ is the Teichm\"{u}ller lift.
Note that the above diagram is a pushout diagram.
We sometimes use a notation $\overline{W}_n\O_X:=W_n\O_X/p$.

\begin{definition}
    Let $X$ be an $\F_p$-scheme and let $n$ be a positive integer.
    We say $X$ is \textit{$n$-quasi-$F$-split} if there exists a dashed arrow in the following diagram of $W_n\O_X$-modules which makes the diagram commutes; 
\[\begin{tikzcd}
W_n\O_X \arrow{r}{{F}} \arrow{d}{R^{n-1}} & F_* W_n \arrow[dashed]{ld}{\exists} \O_X \\
\O_X.
\end{tikzcd}
\] 
We say $X$ is \textit{quasi-$F$-split} if it is $n$-quasi-$F$-split for some $n$ and the least of such $n$ is called the \textit{quasi-$F$-split height} of $X$, denoted by $\sht(X)$.
\end{definition}
Clearly $X$ is $n$-quasi-$F$-split if and only if $F \colon \O_X \to F_*(W_n\O_X/p)$ splits as a morphism of $\O_X$-modules.
We emphasize that this notion is interesting not only for global varieties but also local varieties (singularities) and quasi-$F$-splitting implies very strong restrictions on varieties.
For example, we have the following.
\begin{proposition}
    If a smooth projective scheme over $k$ is quasi-$F$-split, then its Kodaira dimension is non-positive.
\end{proposition}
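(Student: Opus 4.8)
The plan is to deduce from quasi-$F$-splitting that $-K_X$ is $\Q$-effective — that is, $H^0(X,\omega_X^{\otimes(-N)})\neq 0$ for some integer $N\ge 1$ — and then to observe that this alone forces $\kappa(X)\le 0$. For the latter, choose such an $N$ together with an effective integral divisor $D$ with $-NK_X\sim D$; then $K_X\sim_{\Q}-\frac{1}{N}D$, so $\kappa(X)=\kappa(X,K_X)=\kappa(X,-D)$. Assuming $X$ connected (otherwise treat the components separately), $H^0(X,\O_X)=k$, and for every $m\ge 1$ the space $H^0\bigl(X,\O_X(-mD)\bigr)\subseteq H^0(X,\O_X)$ contains no nonzero section unless $mD=0$. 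Hence if $D\neq 0$ then $\kappa(X,-D)=-\infty$, while if $D=0$ then $NK_X\sim 0$ and $\kappa(X)=0$; either way $\kappa(X)\le 0$. So it suffices to produce one nonzero section of a negative power of $\omega_X$.

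The case $n=1$ (namely $F$-splitting) is the model. A splitting of $F\colon\O_X\to F_*\O_X$ is a nonzero element of $\Hom_{\O_X}(F_*\O_X,\O_X)$, and Grothendieck duality for the finite morphism $F$ — using that $F^{!}$ sends the dualizing complex to the dualizing complex — gives $\mathcal{H}om_{\O_X}(F_*\O_X,\O_X)\cong\mathcal{H}om_{\O_X}(F_*\O_X,\omega_X)\otimes\omega_X^{-1}\cong F_*\omega_X\otimes\omega_X^{-1}\cong F_*\omega_X^{\otimes(1-p)}$, with global sections $H^0(X,\omega_X^{\otimes(1-p)})$; so an $F$-splitting yields a nonzero section there, and $-K_X$ is $\Q$-effective. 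For general $n$, assume $X$ is $n$-quasi-$F$-split, so $F\colon\O_X\to F_*\overline{W}_n\O_X$ has a retraction $\phi$, a nonzero element of $\Hom_{\O_X}(F_*\overline{W}_n\O_X,\O_X)=H^0\bigl(X,\mathcal{H}om_{\O_X}(F_*\overline{W}_n\O_X,\O_X)\bigr)$. The strategy is to describe this sheaf precisely enough that a nonzero section of it feeds into a nonzero section of some $\omega_X^{\otimes(1-p^{i})}$.

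Since Frobenius push-forward is right adjoint to Frobenius pull-back and $F^{!}$ preserves the dualizing complex $\omega_X^{\bullet}$, we have $R\mathcal{H}om_{\O_X}(F_*\overline{W}_n\O_X,\omega_X^{\bullet})=F_*\,R\mathcal{H}om_{\O_X}(\overline{W}_n\O_X,\omega_X^{\bullet})$. Feeding the standard de Rham--Witt exact sequences $0\to\mathcal{K}_n\to\overline{W}_n\O_X\to\overline{W}_{n-1}\O_X\to 0$ (with $\mathcal{K}_n$ a Frobenius push-forward of a locally free sheaf, $X$ being smooth) into this identity and inducting on $n$, one finds that $\overline{W}_n\O_X$ is Cohen--Macaulay, hence that $\mathcal{H}om_{\O_X}(F_*\overline{W}_n\O_X,\O_X)=\mathcal{H}om_{\O_X}(F_*\overline{W}_n\O_X,\omega_X)\otimes\omega_X^{-1}$ carries a finite filtration by subsheaves whose graded pieces are $F_*\omega_X^{\otimes(1-p)}$ and the sheaves $F^{2}_*\bigl(\mathcal{C}_j^{\vee}\otimes\omega_X^{\otimes(1-p^{2})}\bigr)$ for $1\le j\le n-1$, where $\mathcal{C}_j:=\coker\bigl(\O_X\hookrightarrow F^{j}_*\O_X\bigr)$ is locally free. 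Dualizing the defining sequence of $\mathcal{C}_j$ realizes $\mathcal{C}_j^{\vee}$ as a subsheaf of $(F^{j}_*\O_X)^{\vee}\cong F^{j}_*\omega_X^{\otimes(1-p^{j})}$, so each of these graded pieces embeds $\O_X$-linearly into $F^{i}_*\omega_X^{\otimes(1-p^{i})}$ for a suitable $1\le i\le n+1$. Now a nonzero global section of a finitely filtered sheaf lies in the smallest step of the filtration containing it and maps to a nonzero section of the corresponding graded piece; applying this to $\phi$ and composing with the embeddings above, we obtain $H^0(X,\omega_X^{\otimes(1-p^{i})})\cong H^0\bigl(X,F^{i}_*\omega_X^{\otimes(1-p^{i})}\bigr)\neq 0$ for some $i$. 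By the first paragraph, $\kappa(X)\le 0$.

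I expect the genuinely delicate point to be the duality computation of $R\mathcal{H}om_{\O_X}(\overline{W}_n\O_X,\omega_X^{\bullet})$: one must pin down the exact sequences for $\overline{W}_{\bullet}\O_X$ and their behaviour modulo $p$, keep careful track of the Frobenius twists on the successive dual graded pieces (these twists affect the exponents $1-p^{i}$ but not the conclusion), and check the embeddings $\mathcal{C}_j^{\vee}\hookrightarrow F^{j}_*\omega_X^{\otimes(1-p^{j})}$. Everything else — the reduction of the first paragraph and the section-extraction of the third — is formal.
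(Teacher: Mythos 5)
The paper offers no argument of its own for this proposition---its ``proof'' is just the citation \cite[Proposition 3.14]{kttwyy1}---so the real question is whether your direct duality argument stands on its own. Its skeleton does: the reduction to producing a nonzero section of $\omega_X^{\otimes(1-p^i)}$ is correct; a quasi-$F$-splitting is indeed a nonzero element of $\Hom_{\O_X}(F_*\overline{W}_n\O_X,\O_X)$ (the paper's remark that $n$-quasi-$F$-splitting is equivalent to $\O_X$-linear splitting of $\O_X\to F_*(W_n\O_X/p)$ covers this); and the closing mechanism---filter $F_*\overline{W}_n\O_X$ with locally free graded pieces, dualize (exactness of the dual filtration holds because $\mathcal{E}xt^1$ of locally free sheaves vanishes, and Frobenius push-forwards of locally free sheaves are locally free on smooth $X$), then push a nonzero section into a graded piece embedded in some $F^i_*\omega_X^{\otimes(1-p^i)}$---is sound and is essentially the standard route taken in the cited reference.

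The one concrete error is the identification of the graded pieces: for $n\ge 3$ they cannot be $F_*\omega_X^{\otimes(1-p)}$ together with $F^2_*(\mathcal{C}_j^\vee\otimes\omega_X^{\otimes(1-p^2)})$, $1\le j\le n-1$; already a rank count over a $d$-dimensional $X$ gives $p^d+p^{2d}\sum_{j=1}^{n-1}(p^{jd}-1)$, whereas $\cHom_{\O_X}(F_*\overline{W}_n\O_X,\O_X)$ has rank $p^d(p^{nd}-p^d+1)$, and these disagree once $n\ge3$. The repair is exactly the de Rham--Witt bookkeeping you flagged as delicate: since $F$ acts coordinatewise on Witt vectors of an $\F_p$-algebra and $p=VF$, for $X$ smooth (hence reduced) the kernel of $R\colon \overline{W}_m\O_X\to\overline{W}_{m-1}\O_X$ is $V^{m-1}\O_X/V^{m-1}(F\O_X)\cong F^{m-1}_*\bigl(F_*\O_X/\O_X\bigr)=F^{m-1}_*\mathcal{C}_1$, which is locally free. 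So the $R$-filtration of $F_*\overline{W}_n\O_X$ has graded pieces $F_*\O_X$ and $F^m_*\mathcal{C}_1$ for $2\le m\le n$, and the dual filtration has pieces $F_*\omega_X^{\otimes(1-p)}$ and $F^m_*\bigl(\mathcal{C}_1^\vee\otimes\omega_X^{\otimes(1-p^m)}\bigr)$; since $\mathcal{C}_1^\vee\hookrightarrow(F_*\O_X)^\vee\cong F_*\omega_X^{\otimes(1-p)}$, each piece embeds into $F^{m+1}_*\omega_X^{\otimes(1-p^{m+1})}$ with $m+1\le n+1$, exactly the range your final step uses. (Alternatively, the paper's own sequence $0\to W_{n-1}\O_X/F\xrightarrow{V}\overline{W}_n\O_X\to\O_X\to0$, with $W_{n-1}\O_X/F$ locally free for smooth $X$, gives a two-step filtration that works just as well.) With this correction your concluding argument goes through verbatim; but as written the claimed filtration is false, not merely imprecise, so the repair does need to be made explicit.
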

\begin{proof}
    See \cite[Proposition 3.14]{kttwyy1}.
\end{proof}

We has a following criterion of $n$-quasi-$F$-splitness:
\begin{proposition}\label{quasi-f-split criterion}
    Let $X$ be a geometrically connected proper scheme of dimension $d$ over $k$.
    Then $X$ is $n$-quasi-$F$-split if and only if the morphism
    \[
    F \colon H^d(X, \omega_X) \to H^d(X, \omega_X\otimes_{\O_X}F_*(W_n\O_X/p))
    \]
    induced by $id_{\omega_X}\otimes F$ is injective.
\end{proposition}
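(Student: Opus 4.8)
The plan is to transport the criterion across Grothendieck--Serre duality on $X$, reducing it to the defining splitting condition.

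Recall, as noted right after the definition, that $X$ is $n$-quasi-$F$-split exactly when the morphism $\phi\colon\O_X\to\mathcal G:=F_*(W_n\O_X/p)$, $\phi(x)=[x]^p\bmod p$, admits an $\O_X$-linear retraction; here $\mathcal G$ is a coherent $\O_X$-module because $X$, being of finite type over the perfect field $k$, is $F$-finite. One direction of the Proposition is then immediate: if $s$ retracts $\phi$, then $s\otimes\mathrm{id}_{\omega_X}$ retracts $\mathrm{id}_{\omega_X}\otimes\phi\colon\omega_X\to\omega_X\otimes_{\O_X}\mathcal G$, so the induced map on $H^d$ is a split monomorphism, in particular injective.

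For the converse I would invoke the top-degree case of coherent duality: since $X$ is proper of dimension $d$ over $k$, for every coherent sheaf $\mathcal F$ there is an isomorphism $H^d(X,\mathcal F)^{\vee}\cong\Hom_{\O_X}(\mathcal F,\omega_X)$, natural in $\mathcal F$, with all these $k$-spaces finite dimensional by properness. Through these isomorphisms, the $k$-linear dual of the map in the Proposition becomes
\[
\Hom_{\O_X}(\omega_X\otimes_{\O_X}\mathcal G,\ \omega_X)\ \xrightarrow{\ \ -\ \circ\ (\mathrm{id}_{\omega_X}\otimes\phi)\ \ }\ \Hom_{\O_X}(\omega_X,\omega_X),
\]
so the map in the Proposition is injective if and only if this last map is surjective; in particular $\mathrm{id}_{\omega_X}$ then lies in its image, giving some $\psi\colon\omega_X\otimes_{\O_X}\mathcal G\to\omega_X$ with $\psi\circ(\mathrm{id}_{\omega_X}\otimes\phi)=\mathrm{id}_{\omega_X}$.

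It remains to upgrade $\psi$ to a retraction of $\phi$ itself. By tensor--hom adjunction $\psi$ corresponds to $\widetilde\psi\colon\mathcal G\to\cHom_{\O_X}(\omega_X,\omega_X)$ with $\widetilde\psi\circ\phi=\nu$, where $\nu\colon\O_X\to\cHom_{\O_X}(\omega_X,\omega_X)$ is the canonical morphism; using that $\nu$ is an isomorphism, $\nu^{-1}\circ\widetilde\psi$ is an $\O_X$-linear retraction of $\phi$, so $X$ is $n$-quasi-$F$-split and the cycle of implications closes.

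The step I expect to carry the real content is this last identification $\cHom_{\O_X}(\omega_X,\omega_X)\cong\O_X$, which is what lets one pull the $\omega_X$-twisted retraction $\psi$ back to an honest retraction of $\phi$: it is immediate when $\omega_X$ is invertible (for instance when $X$ is Gorenstein) and holds more generally when $X$ satisfies $(R_0)$ and $(S_2)$ --- in particular for normal $X$, the case relevant to the applications --- but for a fully general proper $X$ one must either appeal to the ambient hypotheses or argue a little more carefully at this point. The remaining ingredients --- coherence of $\mathcal G$, the naturality and finiteness in the duality isomorphism, and the projection-formula and adjunction bookkeeping --- are routine; the geometric-connectedness hypothesis serves only to normalize the duality pairing.
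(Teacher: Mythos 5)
Your proof takes essentially the same route as the paper's: the paper reformulates $n$-quasi-$F$-splitting as surjectivity of the evaluation map $H^0(X,\cHom_{\O_X}(F_*(W_n\O_X/p),\O_X))\to H^0(X,\O_X)=k$ and then invokes top-degree Serre duality, which is exactly your dualization read in the opposite direction. The subtlety you flag about identifying $\cHom_{\O_X}(\omega_X,\omega_X)$ with $\O_X$ (so that the $\omega_X$-twisted retraction descends to a retraction of $F$) is glossed over by the paper's one-line appeal to Serre duality, so your write-up is, if anything, the more careful one.
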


\begin{proof}
    By definition, $n$-quasi-$F$-splitting is equivalent to the evaluation map
    \[
    H^0(X, \cHom_{\O_X}(F_*W_n\O_X/p, \O_X)) \to H^0(X, \O_X)=k
    \]
    being surjective.
    By the Serre duality, we get the desired statement.
\end{proof}


A direct connection between $F$-split/quasi-$F$-split and ordinary/Hodge-Witt is the following due to Nakkajima \cite{nakkajima2022}.
\begin{theorem}\label{nakkajima finite}
Let $X$ be a proper scheme over $k$.
\begin{enumerate}
    \item If $X$ is $F$-split, then the Frobenius $F$ on $H^i(X, W\O_X)$ is bijective for every $i$.
    \item If $X$ is quasi-$F$-split, then $H^i(X, W\O_X)$ is finitely generated for every $i$.
\end{enumerate}
\end{theorem}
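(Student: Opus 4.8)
The plan is to prove both statements at the level of the finite truncations $W_n\O_X$ and then pass to the limit. Note first that $F$-split and $n$-quasi-$F$-split schemes are automatically reduced: if $a^p=0$ then $F(a)=a^p=0$, resp.\ the image $[a]^p\bmod p=[a^p]\bmod p$ of $a$ in $\overline{W}_n\O_X$ vanishes, so $a=0$ by the retraction. Hence $W\O_X$ is $p$-torsion-free and the Witt-vector Frobenius, which in characteristic $p$ is induced functorially by the absolute Frobenius of $\O_X$, is injective on each $W_n\O_X$ and on $W\O_X$ and commutes with the restriction maps $R$ and with $V$; in particular the exact sequences $0\to\O_X\xrightarrow{V^{n-1}}W_n\O_X\xrightarrow{R}W_{n-1}\O_X\to0$ are $F$-equivariant, with $F$ acting on the subsheaf $\O_X$ as the absolute Frobenius. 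Since $X$ is proper, each $H^i(X,W_n\O_X)$ has finite length over $W$, so the inverse system $\bigl(H^i(X,W_n\O_X)\bigr)_n$ is Mittag--Leffler, $R^1\varprojlim$ vanishes, $H^i(X,W\O_X)=\varprojlim_nH^i(X,W_n\O_X)$, and the Frobenius of this group is the inverse limit of the Frobenii on the truncations.

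For (1) I would first treat $n=1$: if $X$ is $F$-split then the absolute Frobenius on $H^i(X,\O_X)$ is split injective (apply the retraction $s$), and a split injective $\sigma$-semilinear endomorphism of a finite-dimensional $k$-vector space over the perfect field $k$ is bijective, since its image is a subspace of the same dimension. Then, by induction on $n$, the long exact cohomology sequence of the $F$-equivariant sequence above together with the five lemma gives that $F$ is bijective on $H^i(X,W_n\O_X)$ for every $n$ and $i$. Taking $\varprojlim_n$ and using the Mittag--Leffler formalism, $F$ on $H^i(X,W\O_X)$ is an inverse limit of bijections, hence bijective.

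For (2) assume $X$ is $n$-quasi-$F$-split, so $F\colon\O_X\to F_*(W_n\O_X/p)$ is a split monomorphism of $\O_X$-modules, and hence $H^i(X,\O_X)\to H^i(X,\overline{W}_n\O_X)$ is split injective for every $i$. I would invoke the finiteness criterion of Lemma \ref{F cobounded implies finiteness}: a complete separated module with semilinear operators $F,V$ satisfying $FV=VF=p$ is finitely generated over $W$ as soon as $F$ is co-bounded, i.e.\ $V^cM\subseteq FM$ for some $c\geq0$; for $M:=H^i(X,W\O_X)$ the completeness and separatedness hold because $X$ is proper (note $p^jM=V^jF^jM\subseteq V^jM$, so $M$ is $p$-adically, hence $V$-adically, separated as an inverse limit of finite-length $W$-modules). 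So it suffices to show that $n$-quasi-$F$-splitting forces $F$ to be co-bounded on $M$, with $c$ controlled by $n$. The mechanism I would use: from $p=VF$ and $V$ injective one obtains an exact sequence of sheaves $0\to\coker\bigl(F\colon W\O_X\to W\O_X\bigr)\to W\O_X/pW\O_X\to\O_X\to0$ on which $V$ vanishes on the quotient $\O_X$, together with $W\O_X/pW\O_X=\varprojlim_m\overline{W}_m\O_X$; since $\coker(F\colon M\to M)$ embeds into $H^i\bigl(X,\coker(F\colon W\O_X\to W\O_X)\bigr)$ via the long exact sequence of $0\to W\O_X\xrightarrow{F}W\O_X\to\coker F\to0$, the task reduces to showing that a power of $V$ annihilates the relevant part of this $H^i$, and the $n$-quasi-$F$-split hypothesis is exactly what allows one to replace $W\O_X/pW\O_X$ by its truncation $\overline{W}_n\O_X$, on which $V^n=0$.

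The step I expect to be the main obstacle is this last one in (2): turning the qualitative input (that $H^i(X,\O_X)\hookrightarrow H^i(X,\overline{W}_n\O_X)$ for all $i$) into the quantitative conclusion (that $V^c$ annihilates $\coker(F\colon H^i(X,W\O_X)\to H^i(X,W\O_X))$). The delicate point is that, reduced modulo $p$, the short exact sequences linking successive $\overline{W}_m\O_X$ involve $\coker(\text{absolute Frobenius on }\O_X)$ rather than $\O_X$ itself, so the quasi-$F$-split retraction and the operators $F,V$ must be carried carefully through these sequences, through the inverse limit over $m$, and through the associated long exact sequences in cohomology. Everything else---part (1), the Mittag--Leffler/limit formalism, the reduction of (2) to co-boundedness, and the final application of Lemma \ref{F cobounded implies finiteness}---is routine by comparison.
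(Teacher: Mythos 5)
Your part (1) is correct and is the same argument the paper gives (injectivity, hence bijectivity, of the semilinear $F$ on $H^i(X,\O_X)$, induction on $n$ through the restriction sequences, and a Mittag--Leffler passage to the limit), so there is nothing to discuss there. The problem is part (2): your argument stops precisely at the step that carries the content of the theorem, and you say so yourself. You reduce (2) to the claim that $n$-quasi-$F$-splitting forces $V^{c}M\subseteq FM$ for $M=H^i(X,W\O_X)$ with $c$ controlled by $n$, offer only a ``mechanism'' involving the sheaf sequence $0\to\coker(F\colon W\O_X\to W\O_X)\to W\O_X/p\to\O_X\to0$, and then declare this step the main obstacle without carrying it out. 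That claim is exactly where the splitting hypothesis has to enter quantitatively, so as written the proof of (2) is a reduction plus an unproven assertion. Your route also makes the missing step harder than necessary: the condition $V^{c}M\subseteq FM$ is stronger than the hypothesis of Lemma \ref{F cobounded implies finiteness}, which only asks that $M/FM$ have finite length, and working with $\coker(F)$ on the limit sheaf $W\O_X$ forces you to commute cokernels and cohomology with the inverse limit over $m$ --- precisely the manipulations you yourself flag as delicate.

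The paper closes this gap at finite level and never touches the limit-level cokernel. By Serre's corollary (quoted right after Lemma \ref{F cobounded implies finiteness}) it suffices to bound the lengths $h^i(X,W_m\O_X/F)$, where $W_m\O_X/F:=\coker(F\colon\O_X\to\overline{W}_m\O_X)$. For $m\geq\sht(X)$ the sequence $0\to\O_X\xrightarrow{F}\overline{W}_m\O_X\to W_m\O_X/F\to0$ splits, so
\[
h^i(X,W_m\O_X/F)=h^i(X,\overline{W}_m\O_X)-h^i(X,\O_X),
\]
while the (generally non-split) sequence $0\to W_{m-1}\O_X/F\xrightarrow{V}\overline{W}_m\O_X\xrightarrow{R^{m-1}}\O_X\to0$ gives
\[
h^i(X,\overline{W}_m\O_X)\leq h^i(X,W_{m-1}\O_X/F)+h^i(X,\O_X).
\]
Combining the two, $h^i(X,W_m\O_X/F)\leq h^i(X,W_{m-1}\O_X/F)$ for $m\geq\sht(X)$, so the lengths are bounded and finite generation of $H^i(X,W\O_X)$ follows. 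This short length count on the truncations $\overline{W}_m\O_X$ is the ingredient your sketch is missing; even if you insist on your co-boundedness formulation, you would still need an argument of essentially this finite-level kind to produce it.
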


\begin{proof}
(1)It is enough to prove that $F$ on $H^i(X, W_n\O_X)$ is bijective for every $n$.
The Frobenius $F$ on $H^i(X, \O_X)$ is injective by the assumption and hence bijective.
By induction on $n$, it is easy to show the desired statement.

(2) This is proved in \cite[Theorem 4.3, Remark 4.5]{nakkajima2022}, but we repeat the argument here for  reader's convenience. 

    First recall that the following lemma.

    \begin{lemma}\cite[\S5, Proposition 3]{Serre58}\label{F cobounded implies  finiteness}
        Let $M$ be a $W$-module equipped with a $\sigma$-linear operator $F \colon M \to M$ and a $\sigma^{-1}$-linear operator $V \colon M \to M$ satisfying $FV=p=VF$.
        Assume that $M$ is $V$-profinite, i.e., $M \simeq \varprojlim_nM/V^nM$ and $M/VM$ is of finite length.
        If $M/FM$ is of finite length, then $M$ is a finitely generated $W$-module.
    \end{lemma}
    An immediate consequence of this lemma is the following:
    \begin{corollary}\cite[\S5, Corollaire 1]{Serre58}
        Let $X$ be a proper scheme over $k$ and let $i$ be an integer.
        If $\varprojlim H^i(X, W_m\O_X/F)$ is of finite length, then $H^i(X, W\O_X)$ is finitely generated.
    \end{corollary}
    From this, it is enough to show that quasi-$F$-splitting implies the boundedness of the sequence $\{h^i(X, W_m\O_X/F)\}_m$ for each $i$.
    Consider the following two exact sequences;
    \begin{align*}
        0 \to \O_X \xrightarrow{F} W_m\O_X/p \to W_m\O_X/F \to 0, \\
        0 \to W_{m-1}\O_X/F \xrightarrow{V} W_m\O_X/p \xrightarrow{R^{m-1}} \O_X \to 0.
    \end{align*}
    Assume that $m \geq \sht(X)$.
    Then the first sequence splits, and hence we have an equality
    \[
    h^i(X, W_m\O_X/F)=h^i(X, W_m\O_X/p)-h^i(X, \O_X).
    \]
    On the other hand, the second sequence implies an inequality
    \[
    h^i(X, W_m\O_X/p) \leq h^i(X, W_{m-1}\O_X/F)+h^i(X, \O_X).
    \]
    Combining these two, we get
    \[
    h^i(X, W_m\O_X/F)\leq h^i(X, W_{m-1}\O_X/F)
    \]
    provided $m \geq \sht(X)$.
\end{proof}



\section{Examples and nonexamples}\label{nonexamples}
In this section, we collect some (non)examples of (quasi-)$F$-split schemes and Hodge-Witt or ordinary schemes within smooth proper schemes over $k$.
\begin{theorem}
    Let $X$ be a K3 surface or an abelian variety of dimension $g$ over $k$.
    Then $X$ is quasi-$F$-split if and only if it is Hodge-Witt.
\end{theorem}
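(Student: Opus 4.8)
The plan is to treat the two cases uniformly by reducing the statement, via Theorem~\ref{nakkajima finite} and the criterion of Proposition~\ref{quasi-f-split criterion}, to a statement about the single cohomology group $H^d(X,\omega_X)$ (here $d=2$ for a K3 surface, $d=g$ for an abelian variety). The point is that for these two classes of varieties the Hodge-Witt condition is governed entirely by the slope-$0$ part of crystalline cohomology in a single degree, so that one does not need to check bijectivity/finiteness of $F$ on all the bidegrees $H^j(X,W\Omega_X^i)$.

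\emph{Step 1 (quasi-$F$-split $\Rightarrow$ Hodge-Witt).} This direction is the easy one and works for any proper scheme: by Theorem~\ref{nakkajima finite}(2), if $X$ is quasi-$F$-split then $H^i(X,W\O_X)$ is a finitely generated $W$-module for every $i$, i.e.\ the row $i=0$ of the de Rham-Witt double complex consists of finitely generated modules. For a K3 surface, $H^j(X,W\Omega_X^i)$ is automatically finitely generated unless $(i,j)=(0,0),(1,1),(2,2)$ by dimension/degree reasons, and the diagonal entries $(1,1)$ and $(2,2)$ are controlled by $(0,0)$ via the slope spectral sequence together with the Poincar\'e duality $H^2(X,W\O_X)\cong H^0(X,W\Omega_X^2)^{\vee}$-type pairing; more precisely, the Artin-Mazur formal group $\widehat{\Phi}_X^2$ has $H^2(X,W\O_X)$ as its Dieudonn\'e module, and a K3 surface is Hodge-Witt exactly when this formal group has finite height, i.e.\ when $H^2(X,W\O_X)$ is finitely generated. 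For an abelian variety the de Rham-Witt cohomology is the exterior algebra $H^j(X,W\Omega_X^i)=\bigwedge^{i}H^1(X,W\O_X)^{(\text{appropriate twist})}$-style decomposition coming from the Albanese, so finiteness of $H^1(X,W\O_X)$ (equivalently of $H^{g}(X,W\O_X)$, its top exterior power up to duality) forces finiteness everywhere. So in both cases Hodge-Wittness is equivalent to finite generation of the relevant slope-$0$ group, which Theorem~\ref{nakkajima finite}(2) already delivers.

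\emph{Step 2 (Hodge-Witt $\Rightarrow$ quasi-$F$-split).} Here I use Proposition~\ref{quasi-f-split criterion}: $X$ is $n$-quasi-$F$-split iff $F\colon H^d(X,\omega_X)\to H^d(X,\omega_X\otimes F_*(W_n\O_X/p))$ is injective, and by Serre duality this is dual to the surjectivity, for some $n$, of the Frobenius-type map on $H^0$ of the $\cHom$ sheaves — equivalently to the stabilization of the sequence $h^0(X,\cHom(F_*(W_n\O_X/p),\O_X))$ at the value $1$. Running the exact-sequence bookkeeping from the proof of Theorem~\ref{nakkajima finite}(2) \emph{in reverse}: the two short exact sequences
\[
0\to\O_X\xrightarrow{F}W_m\O_X/p\to W_m\O_X/F\to 0,\qquad
0\to W_{m-1}\O_X/F\xrightarrow{V}W_m\O_X/p\xrightarrow{R^{m-1}}\O_X\to 0
\]
show that the sequence $\{h^d(X,W_m\O_X/F)\}_m$ is eventually stationary precisely when $H^d(X,W\O_X)$ is finitely generated, and then Serre duality converts that stationarity into the required splitting. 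Hodge-Wittness supplies the finite generation of $H^d(X,W\O_X)$ (again using, as in Step~1, that for K3 surfaces and abelian varieties the full Hodge-Witt condition is equivalent to finiteness of this one group, since the other $H^j(X,W\Omega_X^i)$ are controlled by it), so the sequence stabilizes and $X$ is $n$-quasi-$F$-split for $n\gg 0$.

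\emph{Main obstacle.} The routine part is the homological-algebra chase with the two short exact sequences; the genuine input — and the step I expect to require the most care — is the claim that for a K3 surface or an abelian variety, \emph{all} of the Hodge-Witt finiteness conditions $H^j(X,W\Omega_X^i)$ finitely generated collapse to the single condition that $H^d(X,W\O_X)$ (equivalently the Artin-Mazur / Dieudonn\'e module in degree $d$) be finitely generated. For abelian varieties this is the multiplicativity of de Rham-Witt cohomology under the group structure (so everything is built from $H^1$); for K3 surfaces it is the classical fact (Artin-Mazur, Illusie) that the slope spectral sequence degenerates up to the non-finiteness concentrated in $H^2(X,W\O_X)$, so that "Hodge-Witt" $\iff$ "finite height" $\iff$ "$H^2(X,W\O_X)$ finitely generated". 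Once that reduction is in place, both implications follow by combining Theorem~\ref{nakkajima finite}, Proposition~\ref{quasi-f-split criterion}, and Serre duality as above.
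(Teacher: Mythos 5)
Your Step 2 contains the essential gap. The paper's own examples show that finite generation of $H^d(X,W\O_X)$ --- indeed full Hodge--Wittness, or even ordinarity --- does \emph{not} imply quasi-$F$-splitting in general: classical and supersingular Enriques surfaces in characteristic $2$ are Hodge--Witt (so $H^2(X,W\O_X)$ is finitely generated) yet not quasi-$F$-split, and the Achinger--Zdanowicz rational threefold is ordinary but not quasi-$F$-split. Hence ``running the bookkeeping of Theorem~\ref{nakkajima finite}(2) in reverse'' cannot succeed: even if the sequence $\{h^d(X,W_m\O_X/F)\}_m$ stabilizes, a dimension count does not produce injectivity of the specific map $F\colon H^d(X,\omega_X)\to H^d(X,\omega_X\otimes F_*(W_n\O_X/p))$ required by Proposition~\ref{quasi-f-split criterion}; a splitting is a statement about one particular map, and your argument, which uses nothing specific to K3 surfaces or abelian varieties beyond the reduction to $H^d(W\O_X)$, would equally ``prove'' that the char-$2$ Enriques surfaces are quasi-$F$-split, which is false. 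The genuine input for this direction is structural: for K3 surfaces the paper simply cites Yobuko's theorem that finite Artin--Mazur height (equivalently, $H^2(X,W\O_X)$ finitely generated) implies quasi-$F$-split, and for abelian varieties of $p$-rank $g-1$ it proves that $H^g(A,W\O_A)$ is free of rank $2$ with an explicit Dieudonn\'e-module basis satisfying $V(e_1)=e_2$, $F(e_1)=e_2$, $F(e_2)=pe_1$, so that $F\colon H/VH\to H/V^2H$ is injective and $\sht(A)\le 2$; nothing replacing this appears in your proposal.

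Step 1 also contains a factual error: the de Rham--Witt cohomology of an abelian variety is \emph{not} the exterior algebra on $H^1(A,W\O_A)$ bidegree by bidegree (only the crystalline cohomology is the exterior algebra on $H^1_{\mathrm{cris}}$). If your claim were true, every abelian variety would be Hodge--Witt, contradicting Serre's example of the self-product of a supersingular elliptic curve. The correct statement, used by the paper, is the Illusie--Ekedahl criterion that an abelian variety is Hodge--Witt iff its $p$-rank is $g$ or $g-1$, and the failure of finite generation of $H^g(A,W\O_A)$ when $f(A)\le g-2$ is established via Serre's surjectivity of $R$ (the length of $H^g(A,W_n\O_A)$ is $n$), not by any multiplicativity. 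In short, the paper's proof of the stated theorem is a reduction --- base change to $\bar{k}$, then Illusie's result plus the earlier K3 theorem, and Illusie--Ekedahl plus Theorem~\ref{quasi f split abelian variety} --- and the real content lies in that latter theorem, which your proposal does not supply.
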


\begin{proof}
    We may assume that $k$ is an algebraically closed field since a perfect base change does not change the quasi-$F$-split height (\cite[Proposition 3.4]{yobuko20}).
    By \cite[p. 653, \S7.2.]{illusie_de_rham_witt}, a $K3$ surface is Hodge-Witt if and only if it has finite Artin-Mazur height.
    Hence the $K3$ cases follows from \cite[Theorem 4.5]{yobuko19}.

    By \cite[Corollary 6.3.16]{illusie-ekedahl}, an abelian variety of dimension $g$ is Hodge-Witt if and only if its $p$-rank ($:=$ the $\F_p$ -dimension of the abelian group of $p$-torsion $\Bar{k}$-points of the abelian variety) is $g$ or $g-1$.
    Hence the abelian variety case follows from Theorem \ref{quasi f split abelian variety}.
\end{proof}

\begin{theorem}\label{quasi f split abelian variety}
Let $A$ be a $g$-dimensional abelian variety over $k$.
Then the quasi-F-split height of $A$ is given by
\[
\sht(A)
=
\begin{cases}
1 ~~\text{if}~~ f(A)=g,\\
2 ~~\text{if}~~ f(A)=g-1, \\
\infty ~~\text{if}~~ f(A) \leq g-2
\end{cases}
\]
where $f(A)$ is the $p$-rank of $A$.
In particular, an abelian variety is quasi-$F$-split if and only if Hodge-Witt.
\end{theorem}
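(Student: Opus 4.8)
The plan is to treat the three ranges of $f(A)$ separately and, throughout, to use Proposition~\ref{quasi-f-split criterion}: since $\omega_A\cong\O_A$ and $\dim_k H^g(A,\O_A)=1$, the variety $A$ is $n$-quasi-$F$-split exactly when $F\colon H^g(A,\O_A)\to H^g(A,F_*(W_n\O_A/p))$ is nonzero, and I would analyse this family of maps (as $n$ varies) through the top de Rham--Witt group $H^g(A,W\O_A)$ with its operators $F,V$, equivalently through the Artin--Mazur formal group $\Phi^g_A$, which is one-dimensional because its tangent space is $H^g(A,\O_A)$. One may first assume $k=\bar k$: the quasi-$F$-split height is unchanged by a perfect base change (\cite[Proposition 3.4]{yobuko20}, already invoked in the previous theorem) and $f(A)$ is a geometric invariant.

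\emph{The case $f(A)=g$.} For $n=1$ we have $W_1\O_A/p=\O_A$ and the map in question is the Frobenius $H^g(A,\O_A)\to H^g(A,F_*\O_A)$. Under the cup-product identification $H^g(A,\O_A)\cong\bigwedge^g H^1(A,\O_A)$ and the multiplicativity of Frobenius on Witt vectors, this is $\bigwedge^g$ of the Hasse--Witt operator on $H^1(A,\O_A)$; hence it is nonzero if and only if that operator is bijective, if and only if its stable rank --- which equals $f(A)$ --- is $g$. So $\sht(A)=1$ iff $f(A)=g$, which is exactly ordinarity.

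\emph{Identifying $\Phi^g_A$.} By the rational degeneracy of the slope spectral sequence together with $\Hcris^{i}(A/W)\cong\bigwedge^{i}\Hcris^{1}(A/W)$, the module $H^g(A,W\O_A)\otimes\Q$ is the slope-$[0,1)$ part of $\bigwedge^g \Hcris^{1}(A/W)$. Two features of the Newton polygon of $\Hcris^1(A/W)$ drive the computation: it is symmetric ($\lambda\leftrightarrow 1-\lambda$), with $f(A)$ copies of $0$ and of $1$; and a simple summand of slope $m/n$ in lowest terms has rank a multiple of $n$. From these one checks that for $f(A)\le g-2$ the $g-f(A)$ smallest positive slopes of $\Hcris^1(A/W)$ already sum to at least $1$, whence the minimal slope of $\bigwedge^g \Hcris^1(A/W)$ (the sum of the $g$ smallest slopes of $\Hcris^1$) is $\ge 1$ and $H^g(A,W\O_A)\otimes\Q=0$; while for $f(A)=g-1$ the two remaining slopes are forced to be $\tfrac12,\tfrac12$, and $H^g(A,W\O_A)\otimes\Q$ has dimension $2$, purely of slope $\tfrac12$. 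Combined with the criterion used in the previous theorem --- $A$ is Hodge--Witt iff $f(A)\ge g-1$ (\cite[Corollary 6.3.16]{illusie-ekedahl}) --- this identifies $\Phi^g_A$: it is the group $G_{1,1}$ (the $p$-divisible group of a supersingular elliptic curve, of height $2$) when $f(A)=g-1$, and it acquires a $\widehat{\G}_a$-summand, i.e.\ has infinite height, when $f(A)\le g-2$.

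\emph{Conclusion and the principal difficulty.} If $f(A)\le g-2$, then $A$ is not Hodge--Witt, so the slope spectral sequence of Illusie--Ekedahl produces a non-finitely-generated $H^i(A,W\O_A)$, whence $A$ is not quasi-$F$-split by Theorem~\ref{nakkajima finite}(2); thus $\sht(A)=\infty$. (When $A$ is not simple this case follows as well, via Poincar\'e reducibility, from Theorem~\ref{double-nonfsplit-nonquasifsplit}; the simple case genuinely needs the de Rham--Witt input.) If $f(A)=g-1$, then $A$ is not ordinary, so $\sht(A)\ge 2$ by the first case, and it remains to prove $\sht(A)\le 2$, i.e.\ that $F\colon H^g(A,\O_A)\to H^g(A,F_*(W_2\O_A/p))$ is nonzero. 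I would obtain this by running the two short exact sequences used in the proof of Theorem~\ref{nakkajima finite}(2), namely $0\to\O_A\xrightarrow{F}W_2\O_A/p\to W_2\O_A/F\to 0$ and $0\to W_1\O_A/F\xrightarrow{V}W_2\O_A/p\to\O_A\to 0$, in top cohomological degree, using $H^{g+1}(A,-)=0$ and $\dim_k H^g(A,\O_A)=1$ to reduce the required nonvanishing to an $F,V$-computation on the rank-two module $D(G_{1,1})=W\langle 1,F\rangle/(F^2-p)$ --- the same computation that shows a supersingular elliptic curve has quasi-$F$-split height $2$ (and is why the self-product of such a curve is not even Hodge--Witt, \cite{serre58av}). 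The principal difficulty is precisely this last reduction: Proposition~\ref{quasi-f-split criterion} involves the whole sheaf $W_n\O_A/p$, whose cohomology in the intermediate degrees $0<i<g$ does not vanish (unlike in the Calabi--Yau setting of \cite{yobuko19}), so one must verify that these intermediate groups do not feed into the relevant top-degree connecting maps; I expect this to come out of the compatibility of the Frobenius-type operators with the exterior-algebra structure on $H^*(A,W\O_A)$, but it is the step requiring the most care.
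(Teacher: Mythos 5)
Your outline settles $f(A)=g$ and correctly locates where the work is, but the two remaining cases each have a genuine gap. The decisive point of the theorem --- that $f(A)=g-1$ forces $\sht(A)\le 2$ --- is precisely the step you leave as an expectation, so the proposal does not yet contain a proof. Two things are missing there. First, your slope argument only controls $H\otimes\Q$ for $H:=H^g(A,W\O_A)$; to run the criterion of Proposition~\ref{quasi-f-split criterion} one needs the \emph{integral} structure, and in particular that $H$ has no torsion. The paper proves $H$ is a free $W$-module of rank two by a short but essential argument: $H/VH\cong H^g(A,\O_A)$ is one-dimensional, $M/VM\neq 0$ for $M=H/H_{tor}$ because all slopes of $M$ are $<1$, hence $H_{tor}/VH_{tor}=0$, and $H_{tor}$ is $V$-adically separated, so $H_{tor}=0$. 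Second, the reduction you flag as the ``principal difficulty'' is in fact not where exterior-algebra compatibilities are needed: since everything takes place in top degree and $H^{g+1}(A,-)$ vanishes, top-degree cohomology is right exact, giving $H^g(A,W_n\O_A)\cong H/V^nH$ and $H^g(A,\overline{W}_n\O_A)\cong H/(V^nH+pH)$; the intermediate cohomology groups you worry about never enter. With the explicit basis $V(e_1)=F(e_1)=e_2$, $V(e_2)=F(e_2)=pe_1$ of the height-two Dieudonn\'e module one checks $F(e_1)=e_2\notin V^2H+pH$, so $F\colon H^g(A,\O_A)\to H^g(A,\overline{W}_2\O_A)$ is nonzero and $\sht(A)\le 2$. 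None of this is carried out in your sketch, and the torsion-freeness of $H$ is an ingredient it does not even mention.

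In the case $f(A)\le g-2$ your inference ``not Hodge--Witt, hence some $H^i(A,W\O_A)$ is not finitely generated'' is a non sequitur as stated: failure of Hodge--Witt only means some $H^j(A,W\Omega^i_A)$ is infinitely generated, possibly with $i>0$, which Theorem~\ref{nakkajima finite}(2) cannot see; you must exhibit the failure in the $W\O_A$-column. Your slope computation shows only that $H^g(A,W\O_A)$ is torsion, which by itself is compatible with finite length (even with vanishing). The paper closes this by Serre's theorem that $R\colon H^g(A,W_{n+1}\O_A)\to H^g(A,W_n\O_A)$ is surjective for abelian varieties, so $H^g(A,W\O_A)$ surjects onto modules of length $n$ for every $n$ and cannot be finitely generated, contradicting quasi-$F$-splitness via Theorem~\ref{nakkajima finite}(2). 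Finally, the parenthetical shortcut via Poincar\'e reducibility does not work as stated: it only yields an isogeny to a product, and quasi-$F$-splitting is not known to be an isogeny invariant, so Theorem~\ref{double-nonfsplit-nonquasifsplit} cannot be applied directly.
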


\begin{proof}
We may assume $k$ is an algebraically closed field.
The equivalence between $f(A)=g$ and $\sht(A)=1$ is well known.

Assume that $f(A)=g-1$.
Since $\Hcr^g(A/W)=\wedge^g\Hcr^1(A/W)$, we see that the slope less than one part $\Hcr^g(A/K)_{[0, 1)}$ is two dimensional over $K:=W[\frac{1}{p}]$.
We claim that $H:=H^g(A, W\O_A)$ is free $W$-module of rank two.
Note that, a priori, $H$ is not necessarily finitely generated as a $W$-module, but equipped with operators $F, V$ such that $H$ is $V$-adically complete.
Let $M:=H/H_{tor}$ be the quotient of $H$ by the torsion submodule $H_{tor}$.
Then $M$ is a free $W(k)$-module of rank two, equipped with operators $F$ and $V$.
The exact sequence $0 \to H_{tor} \to H \to M \to 0$ induces an exact sequence
\[
0 \to H_{tor}/VH_{tor} \to H/VH \to M/VM \to 0
\]
since $V$ on $M$ is injective.
We know that $H/VH=H^g(A, \O_A)$ is one dimensional and $M/VM$ is nonzero since any slope of the Dieudonn\'e module $M$ is less than $1$.
By the above exact sequence, we see $H_{tor}/VH_{tor}=0$.
Since $H_{tor}$ is a submodule of a $V$-adically complete module $H$, it is $V$-adically separated.
So we see $H_{tor}=0$.

Recall that one dimensional Dieudonn\'e crystals over an algebraically closed field are classified by their height.
In particular, there are elements $e_1, e_2 \in M$ which form a basis of $H$ and satisfy the relations
\begin{align*}
    V(e_1)=e_2, V(e_2)=pe_1, \\
    F(e_1)=e_2, F(e_2)=pe_1.
\end{align*}
Hence $F$ on $H$ induces an injection
\[
F \colon H/VH \to H/V^2H. 
\]
This map fits into
\[
\xymatrix{
H^g(A, \O_A) \ar[r]^-{F}\ar[d]_-{\simeq} & H^g(A, \overline{W}_2\O_A) \ar[d] \\
H/VH \ar[r]_-{F} & H/V^2H
}
\]
and we see that the top horizontal map is not zero.
This means $\sht(A) \leq 2$ by Proposition \ref{quasi-f-split criterion}.

Now assume that $f(A) \leq g-2$.
Then $\Hcr^g(A/K)_{[0, 1)}$ is zero and $H^g(A, W\O_A)$ is a torsion module.
If $A$ is quasi-$F$-split, then $H^g(A, W\O_A)$ is finitely generated by Theorem \ref{nakkajima finite}, and hence, is of finite length.
But, by \cite[Th\'eor\`eme 2]{serre58av}, $R \colon H^i(A, W_{n+1}\O_A) \to H^i(A, W_n\O_A)$ is surjective for any $i$, so the length of $H^g(A, W_n\O_A)=n$.
This is a contradiction.
\end{proof}

We remark that, even for smooth projective schemes which have non-positive Kodaira dimension except abelian varieties and K3 surfaces, $F$-split/quasi-$F$-split and ordinary/Hodge-Witt have very different features:
\begin{enumerate}
    \item There is a rational threefold which is ordinary but not quasi-$F$-split.
    Indeed, let $Y$ be the blow up of $\P^3$ at all $\F_p$-rational points $\P^3(\F_p)$.
    Then blowup all the strict transforms of the $\F_p$-rational lines and we get a rational threefold $X$.
    Achinger and Zdanowicz show that this $X$ is ordinary but not liftable to $W_2(k)$ \cite[Theorem 4.1]{achinger-zdanowicz17}.
    In particular, $X$ is not quasi-$F$-split by \cite[Theorem 4.4]{yobuko19}.
    \item There is a rational fourfold which is $F$-split but not Hodge-Witt.
    Indeed, let $Y \subset \P^3$ be a supersingular K3 surface.
    Consider $\P^3$ as a subscheme of $\P^4$ which is the zero locus of the first homogeneous coordinate.
    Let $X$ be the blowup of $\P^4$ with the center $Y$.
    Then $X$ is $F$-split but not Hodge-Witt by \cite[Theorem 5.5]{joshi-rajan03}.
    \item Enriques surfaces are Hodge-Witt for any prime $p>0$ by \cite[p.656, Proposition 7.3.6]{illusie_de_rham_witt}.
    When $p>2$, consider its K3 covering, then the original Enriques surface is quasi-$F$-split if and only if the K3 cover is not supersingular \cite{yobuko20}.
    Note that Enriques surfaces are ordinary when $p>2$.

    When $p=2$, Enriques surfaces are divided into three types; classical, singular, and supersingular according to \cite[\S3]{bombieri-mumford76}.
    Classical or superisngular Enriques surfaces are not quasi-$F$-split and singular ones are $F$-split \cite[Theorem 5.8]{yobuko20}.
    Note that singular or supersingular Enriques surfaces have trivial canonical bundle, not just numerically trivial.
    \begin{table}[h]
    \centering
    \begin{tabular}{|c||c|c|}\hline
       classical & ordinary & not quasi-$F$-split \\ \hline
       singular & ordinary & $F$-split  \\ \hline
       supersingular & not ordinary but Hodge-Witt & not quasi-$F$-split \\ \hline
    \end{tabular}
    \label{tab:my_label}
\end{table}
\end{enumerate}

Let us remark (non)ordinarity of an Enriques surface in characteristic $2$.
By \cite{illusie_de_rham_witt}, we know that the slope spectral sequence of an Enriques surface is $E_1$-degenerate and its $E_1$-terms are given as follows:((i)=ordinary, (ii)=singular, (iii)=supersingular)
\settasks{counter-format=(\roman*)}
\settasks{label-width=18pt}
\begin{tasks}(3)
\task
$\begin{matrix}
    0 & k & W \\
    0 & W^{10}\oplus k & 0\\
    W & 0 & 0
\end{matrix}
$
\task
$
\begin{matrix}
    k & 0 & W \\
    0 & W^{10} & k\\
    W & 0 & 0
\end{matrix}
$
\task
$
\begin{matrix}
    k & k & W \\
    0 & W^{10} & 0\\
    W & 0 & 0
\end{matrix}
$
\end{tasks}
We will determine which type is ordinary or not.
Let us consider the classical case.
The only non-trivial Frobenius action is $F$ on $H^2(X, W\Omega_X^1) \simeq H^2(X, \Omega_X^1) \simeq k$.
By the Serre duality, this is dual to $H^0(X, \Omega_X^1)$ with the action given by the Cartier operator $C$.
Note that the Hodge to de Rham spectral sequence degenerates at $E_1$-terms.
By \cite{katsura82}, we know that $X$ has an elliptic or a quasi-elliptic fibration $f \colon X \to \P^1$ with an affine parameter $t$ of $\P^1$ such that $f^*(\frac{dt}{t})$ is a non-zero regular one form on $X$.
This implies that Cartier operator on $H^0(X, \Omega_X^1)$ is bijective.
The singular case follows from the definition.
Supersingular Enriques are not orinary since the Frobenius action on $H^2(X, W\O_X)\simeq H^2(X, \O_X) \simeq k$ is zero by definition.
We remark that the Frobenius action on $H^2(X, W\Omega_X^1) \simeq k$ is also zero since this cohomology is spanned by $dt$ for some elliptic or quasi elliptic fibration $X \to \P^1$ as in the classical case.



\section{$F$-split analogue of Ekedahl's theorem}
In this section, we prove an $F$-split analogue of Ekedahl's theorem \ref{ordinary hodge witt}. 

To begin with, we review a construction of tensor product of Witt rings \cite[Theorem 4.16]{antieau-nikolaus}.
Here we only present what we need later, in particular we work over the base ring $W$, though the original work is over $\Z$.

\begin{definition}
A \textit{$p$-typical Cartier module} over $k$ is a triple $(M, F, V)$ where $M$ is a $W$-module and $F$ (resp. $V$) is a $\sigma$-linear (resp. $\sigma^{-1}$-linear) endomorphism of $M$ satisfying $FV=p$.
\end{definition}

For two $p$-typical Cartier module over $k$, $M$ and $N$, we will define their tensor product $M \boxtimes_{W} N$, which is a new $p$-typical Cartier module over $k$.

Let $M$ be a $W$-module equipped with a $\sigma$-linear map $F \colon M \to M$.
We define a $p$-typical Cartier module $M[V]$ over $k$ by 
\[
M[V]:=\bigoplus_{i \geq 0}\sigma_*^iMV^i,
\]
where $V^i$'s are new symbols (so $\sigma_*^iMV^i$ is isomorphic to the $W$-module $\sigma_*^iM$).
The $(F, V)$-structure on $M[V]$ are given by
\begin{align*}
&F(\sum_{i \geq 0} m_iV^i)=F(m_0)+\sum_{i>0}pm_iV^{i-1}, \\
&V(\sum_{i \geq 0} m_iV^i)= \sum_{i \geq 0} m_iV^{i+1}.
\end{align*}
Note that $F$ (resp. $V$) on $M[V]$ is $\sigma$-linear (resp. $\sigma^{-1}$-linear) thanks to the twisting $\sigma_*^i$ on degree $i$ and $F$ and $V$ clearly satisfy $FV=p$.

For two $p$-typical Cartier module $M$ and $N$ over $k$, we define a $p$-typical Cartier module $M \boxtimes_{W} V$ to be
\[
M \boxtimes_{W} N:=(M \otimes_{W}N)[V]/ \sim
\]
where the equivalence relation $\sim$ is generated by
\[
(m \otimes Vn)V^k \sim (Fm \otimes n)V^{k+1}, ~~~~(Vm \otimes n)V^k \sim (m \otimes Fn)V^{k+1}
\]
for all $m \in M$, $n \in N$ and $k \geq 0$.
The $(F, V)$-structure on $M \boxtimes_{W} N$ is induced by the one on $(M \otimes N)[V]$.

Finally we define $M \widehat{\boxtimes}_{W}N$ to be the $V$-adic completion of $M \boxtimes_{W} N$.

\begin{theorem}\cite[Theorem 4.16]{antieau-nikolaus}\label{monoidal structure on witt}
Let $k$ be a perfect field.
For any pair of algebras $A$ and $B$ over $W=W(k)$, we have a natural isomorphism
\[
W(A)\widehat{\boxtimes}_{W} W(B) \simeq W(A \otimes_{W} B).
\]
In particular, if $A$ and $B$ are $k$-algebras, then $W(A)\widehat{\boxtimes}_{W} W(B) \simeq W(A \otimes_k B)$.
\end{theorem}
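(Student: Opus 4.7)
The plan is to construct a natural comparison morphism $\Phi$ between the two sides and then show it is bijective by descending to the associated graded of the $V$-adic filtration. First, the structure maps $A \to A \otimes_W B$ and $B \to A \otimes_W B$ will induce, by functoriality of Witt vectors, morphisms $\alpha : W(A) \to W(A \otimes_W B)$ and $\beta : W(B) \to W(A \otimes_W B)$ of Witt rings commuting with $F$ and $V$. The rule $(x,y) \mapsto \alpha(x)\beta(y)$ is $W$-bilinear and hence factors as a $W$-linear map $\mu : W(A) \otimes_W W(B) \to W(A \otimes_W B)$. I will extend $\mu$ to a $W$-linear map out of the free construction $(W(A) \otimes_W W(B))[V]$ by the formula $zV^i \mapsto V^i(\mu(z))$, noting that the Frobenius-twist $\sigma_*^i$ placed on the $i$-th summand is exactly what is required by the Witt-ring identity $V^i(\sigma^i(a)y) = a \cdot V^i(y)$; a direct computation will then confirm compatibility with the $F$ and $V$ operators of $(W(A) \otimes_W W(B))[V]$.

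The second step is to show that this map descends through the equivalence relation defining $\boxtimes$. The defining relation $(m \otimes Vn)V^k \sim (Fm \otimes n)V^{k+1}$ is sent to the identity
\[
V^k\bigl(\alpha(m)\cdot V\beta(n)\bigr) = V^{k+1}\bigl(F\alpha(m)\cdot \beta(n)\bigr),
\]
which is a special case of the projection formula $x \cdot V(y) = V(F(x)\,y)$ in the Witt ring $W(A \otimes_W B)$; the symmetric relation is handled identically. Because $W(A \otimes_W B)$ is $V$-adically complete, the map extends uniquely to a Cartier-module morphism
\[
\Phi : W(A) \widehat{\boxtimes}_W W(B) \longrightarrow W(A \otimes_W B).
\]
For surjectivity, the image will contain the Teichm\"uller lifts $[a \otimes b] = \alpha([a])\beta([b])$ of pure tensors together with all their Verschiebungen; the universal polynomial formulas for addition and multiplication of Witt vectors then express any element of $W(A \otimes_W B)$ modulo $V^{n+1}$ as a polynomial in these generators. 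Hence the image is $V$-adically dense, and together with the completeness of the source this yields surjectivity.

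For injectivity, I will filter both sides by powers of $V$ and argue inductively that $\Phi$ induces an isomorphism on each truncation by $V^{n+1}$. The base case $n=0$ is the tautology $A \otimes_W B = A \otimes_W B$, and the inductive step reduces to a five-lemma applied to the short exact sequences $0 \to V^n/V^{n+1} \to \cdot/V^{n+1} \to \cdot/V^n \to 0$ on each side, provided the graded pieces are identified. On the Witt side, $V^n W(R)/V^{n+1}W(R) \simeq \sigma_*^n R$ via $V^n([r]) \leftrightarrow r$, so the $n$-th graded piece equals $\sigma_*^n(A \otimes_W B)$. The main obstacle will be to establish the analogous description on the $\widehat{\boxtimes}$ side: one has to use the defining relations to move every occurrence of Verschiebung from $W(A)$ or $W(B)$ onto the formal $V$-symbol, thereby reducing every class in the $n$-th graded piece to a representative of the form $\bigl(\sum_j [a_j] \otimes [b_j]\bigr)V^n$, and then to check that two such representatives coincide modulo $V^{n+1}$ exactly when $\sum_j a_j \otimes b_j$ is the same element of $A \otimes_W B$. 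Once this structural computation is carried out, the inductive five-lemma together with $V$-adic completeness will promote the graded-piece isomorphism to the desired natural isomorphism $W(A)\widehat{\boxtimes}_W W(B) \simeq W(A \otimes_W B)$.
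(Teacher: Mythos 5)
Your construction of the comparison map $\Phi$ via functoriality and the projection formula $x\cdot V(y)=V(F(x)y)$ is fine, and it coincides with the first step of the paper's argument. The problem is that everything after that defers the actual content of the theorem to the step you yourself call ``the main obstacle'': identifying the $n$-th graded piece of $W(A)\boxtimes_W W(B)$ for the $V$-adic filtration with $\sigma_*^n(A\otimes_W B)$. The surjective half is indeed routine (write $x=[x_0]+V\tilde{x}$, $y=[y_0]+V\tilde{y}$ and use the defining relations to push all Verschiebungen onto the formal symbol, so every class in $V^n/V^{n+1}$ is represented by $\bigl(\sum_j [a_j]\otimes[b_j]\bigr)V^n$), but the injective half --- that two such normal forms agree modulo $V^{n+1}$ only when $\sum_j a_j\otimes b_j$ agrees in $A\otimes_W B$ --- is precisely where all the difficulty of the theorem sits: you must control every consequence of a \emph{generated} equivalence relation in a quotient for which you have no a priori presentation, and your sketch offers no mechanism for doing so. As written, this is a genuine gap, not a verification to be filled in later. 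A secondary flaw: your surjectivity argument (``the image is $V$-adically dense, and together with completeness of the source this yields surjectivity'') is not valid as stated; dense image plus completeness of the source does not give surjectivity unless one has surjectivity on graded pieces or truncations --- which is again the deferred computation.

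The gap can in fact be closed inside your framework, but it requires an idea you do not state: the normal-form map $s_n\colon \sigma_*^n(A\otimes_W B)\to V^n/V^{n+1}$ of the boxtimes is surjective by the $V$-pushing argument, and its composite with the graded map induced by $\Phi$ sends $a\otimes b$ to the class of $V^n([a\otimes b])$, i.e.\ is the identity under the standard identification $V^nW(R)/V^{n+1}W(R)\simeq \sigma_*^nR$; hence $s_n$ is also injective, both maps are isomorphisms, and the five lemma on truncations plus comparison of $\varprojlim_n (W(A)\boxtimes_W W(B))/V^n$ with $W(A\otimes_W B)=\varprojlim_n W_n(A\otimes_W B)$ finishes the proof (compare limits of truncations directly rather than invoking ``completeness'' loosely). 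Note that this dévissage route, once completed, is genuinely different from the paper's (Antieau--Nikolaus's) proof, which avoids computing graded pieces for general $A,B$ altogether: it reduces to $A=W[M]$, $B=W[N]$ for commutative monoids via split coequalizers (absolute colimits, preserved by $W(-)$ and compatible with $\widehat{\boxtimes}$ by right-exactness), where $W(W[M])\simeq W[M][[V]]$ and the isomorphism $A[V]\boxtimes B[V]\simeq (A\otimes B)[V]$ is checked by exhibiting an explicit inverse. Either supply the retraction argument above explicitly, or follow the reduction to monoid algebras; as it stands the decisive step is asserted, not proved.
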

\begin{remark}
    In \cite{antieau-nikolaus} they prove this theorem (over $\Z$) for non-commutative rings using non-commutative Witt vectors.
\end{remark}
\begin{proof}
We reproduce the proof for reader's convenience.
Clearly we have a functorial map $W(A)\widehat{\boxtimes}_{W} W(B) \to W(A \otimes_{W} B)$ and we want to show this is an isomorphism.
\begin{claim}
We may assume $A \simeq W[M]$ and $B \simeq W[N]$ for some commutative monoids $M$ and $N$.
\end{claim}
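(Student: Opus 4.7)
The plan is the standard reduction of an isomorphism of functors to the case of free generators via sifted colimits. Every commutative $W$-algebra $A$ can be written as a sifted colimit of monoid algebras; concretely, $A$ is the reflexive coequalizer
\[
W\bigl[W[A]\bigr] \;\rightrightarrows\; W[A] \;\twoheadrightarrow\; A,
\]
where $W[A]$ denotes the monoid algebra on the underlying multiplicative monoid of $A$. Equivalently, $A$ is a filtered colimit of its finitely generated polynomial subalgebras $W[x_1,\dots,x_n] = W[\mathbb{N}^n]$. Thus, if one shows that the natural map $W(A) \widehat{\boxtimes}_W W(B) \to W(A \otimes_W B)$ is functorial and preserves sifted colimits in each variable on both sides, then the isomorphism in general follows from its validity when $A$ and $B$ are monoid algebras. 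Applying this to $A$ and then to $B$ separately gives the claim.

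Next I would check the colimit preservation on each side. For the right-hand side, $-\otimes_W-$ preserves colimits in each variable (being a left adjoint), and the big Witt vector functor preserves sifted colimits of commutative rings, so $W(A\otimes_W B)$ is sifted-colimit preserving in each argument. For the left-hand side, unwinding the construction of the box product shows that $\boxtimes_W$ preserves colimits in each Cartier-module input: it is built as a quotient of the sum $\bigoplus_{i\geq 0}(M\otimes_W N)V^i$ by relations that are linear in $F$ and $V$, and both $M[V]$ and the identification of the relations are functorial and colimit-compatible in $M, N$. The Cartier-module structure on $W(-)$ itself likewise preserves sifted colimits.

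The main obstacle is the $V$-adic completion in $\widehat{\boxtimes}_W$: completion does not in general commute with colimits. To handle this I would examine the $V$-filtration on $W(A)\boxtimes_W W(B)$ and argue that, for the specific diagrams at hand, the pro-system of quotients $\{W(A)\boxtimes_W W(B)/V^n\}_n$ is Mittag--Leffler uniformly in $A$ and $B$. This should follow because for monoid algebras $W[M]$ the Witt ring admits an explicit Teichm\"uller-type expansion making the $V$-filtration compatible with the surjections and coequalizers produced in the resolution step, so that completing commutes with the colimit. Once this compatibility is established, the problem is reduced to the case $A \simeq W[M]$ and $B \simeq W[N]$, as claimed.
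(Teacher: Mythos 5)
Your reduction goes through sifted colimits, and this is where it breaks. The assertion that the Witt vector functor ``preserves sifted colimits of commutative rings'' is false: $W$ is an inverse limit $\varprojlim_n W_n$, and while each $W_n$ commutes with sifted colimits, the limit does not. Concretely, $\varinjlim_n W(\F_{p^n}) \to W(\overline{\F}_p)$ is not an isomorphism (the colimit is not $p$-adically complete), so your alternative route ``$A$ is a filtered colimit of its polynomial subalgebras'' fails at the very first step, and even for the reflexive coequalizer presentation $W[W[A]]\rightrightarrows W[A]\twoheadrightarrow A$ you would still have to prove that $W(-)$ and $\widehat{\boxtimes}_W$ carry it to a coequalizer, which is exactly the content you have not supplied. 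The second gap is the completion: you correctly identify that $V$-adic completion need not commute with colimits, but the proposed fix (``the pro-system is Mittag--Leffler uniformly in $A$ and $B$'') is only asserted, with ``this should follow because\dots''; no such uniform statement is established, and for a general filtered colimit it is not clear it is even true.

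The paper (following Antieau--Nikolaus) sidesteps both issues by choosing a \emph{split} coequalizer presentation $W[M_1]\rightrightarrows W[M_0]\to A$ (with a simultaneous section $s$ of both arrows and $\pi$ surjective). Split coequalizers are absolute colimits: they are preserved by \emph{every} functor, with no continuity or exactness hypotheses. Hence applying $W(-)$ termwise still yields a split coequalizer for $W(A)$, termwise tensor products give one for $A\otimes_k B$ and $W(A\otimes_k B)$, and the only extra input needed on the left-hand side is the right-exact presentation
\[
\bigl((M\otimes_W N)[[V]]\bigr)\oplus\bigl((M\otimes_W N)[[V]]\bigr)\to (M\otimes_W N)[[V]]\to M\widehat{\boxtimes}_W N\to 0,
\]
which holds by construction of $\widehat{\boxtimes}_W$. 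This yields the exact sequence ending in $W(A)\widehat{\boxtimes}_W W(B)\to 0$ and reduces the statement to monoid algebras with no Mittag--Leffler analysis and no claim that $W$ preserves colimits. To repair your argument you should replace ``sifted colimit'' by ``split coequalizer'' throughout and replace the uniform Mittag--Leffler claim by the right-exactness of the (completed) box product with respect to such presentations.
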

\begin{proof}[proof of Claim]
Take a split coequalizer diagram
\[
\xymatrix{
W[M_1] \ar@<1.2ex>[r]^-{d_0} \ar@<-1.2ex>[r]_-{d_1} & W[M_0] \ar@<-0.2ex>[l]^-{s}\ar[r]^-{\pi} & A\\
}
\]
where $W[M_i]$ is the $W$-algebra freely generated by a commutative monoid $M_i$.
Here a split coequalizer diagram means a tuple of arrows as above satisfying relations $\pi \circ d_0=\pi \circ d_1, d_0\circ s=id =d_1 \circ s$ and $\pi$ being surjective.
Then the diagram applied term-wise Witt functor
\[
\xymatrix{
W(W[M_1]) \ar@<0.7ex>[r] \ar@<-0.7ex>[r] & W(W[M_0]) \ar[l]\ar[r] & W(A)\\
}
\]
is also a split coequalizer diagram.
Similarly, let
\[
\xymatrix{
W[N_1] \ar@<0.7ex>[r] \ar@<-0.7ex>[r] & W[N_0] \ar[l]\ar[r] & B\\
}
\]
be a split coequalizer diagram for $B$.
By taking term-wise tensor products, we get a split coequalizer diagram for $A\otimes_k B$ and then one for $W(A\otimes_k B)$:
\[
\xymatrix{
W(W[M_1]\otimes W[N_1]) \ar@<0.7ex>[r] \ar@<-0.7ex>[r] & W(W[M_0]\otimes W[N_0]) \ar[l]\ar[r] & W(A\otimes B).\\
}
\]
Similarly, by taking tensor product of split coequalizer diagram for $W(A)$ and $W(B)$, we get
\[
\xymatrix{
W(W[M_1])\otimes_{W}W(W[N_1]) \ar@<0.7ex>[r] \ar@<-0.7ex>[r] & W(W[M_0])\otimes_{W}W(W[N_0]) \ar[l]\ar[r] & W(A)\otimes_{W}W(B).
}
\]
By definition, $M \widehat{\boxtimes}_W N$ fits into an exact sequence
\[
 ((M\otimes_W N)[[V]]) \oplus  ((M\otimes_W N)[[V]]) \to (M\otimes_W N)[[V]] \to M \widehat{\boxtimes}_W N \to 0.
\]
Hence we have an exact sequence
\[
W(W[M_1])\widehat{\boxtimes}_WW(W[N_1]) \to W(W[M_0])\widehat{\boxtimes}_WW(W[N_0]) \to W(A)\widehat{\boxtimes}_WW(B) \to 0.
\]
So the problem is reduced to show the equality
\[
W(W[M_i])\widehat{\boxtimes}_WW(W[N_i]) = W(W[M_i]\otimes_{W}W[N_i]). 
\]
\end{proof}
For $A=W(k)[M]$, we claim that $W(A) \simeq A[[V]]$.
Consider the monoid map $M \to W(A)$ which maps an element of $M$ to its Teichm\"uller lift in $W(A)$.
This induces an algebra morphism $A=W[M] \to W(A)$, which is compatible with Frobenius.
Here we put a Frobenius structure on $A$ induced by the multiplication by $p$ map on $M$ and $\sigma$ on $W$.
By the compatibility with $F$, this map gives us a map of $p$-typical Cartier modules
\[
A[[V]] \to W(A),
\]
which reduces to the identity of $A$ after modulo $V$.
Since both side is $V$-adically complete, we get the desired isomorphism $A[[V]] \simeq W(A)$.
It remains to prove
\[
A[[V]] \widehat{\boxtimes} B[[V]] \simeq (A \otimes B)[[V]].
\]
This follows from the fact that the morphism
\[
A[V] \boxtimes B[V] \to (A\otimes B)[V]
\]
is an isomorphism.
(The existing of inverse map follows from the construction of $M \boxtimes N$.)
\end{proof}

\begin{theorem}\label{fsplit-times-quasifsplit}
    Let $k$ be a perfect field of characteristic $p>0$.
    Let $X$ and $Y$ be schemes over $k$.
    If $X$ is $F$-split and $Y$ is $n$-quasi-$F$-split, then $X \times _k Y$ is $n$-quasi-$F$-split.
\end{theorem}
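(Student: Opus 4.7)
The plan is to reduce to the affine case $X = \Spec A$, $Y = \Spec B$ and to use the Antieau--Nikolaus presentation of Witt vectors of a tensor product (Theorem \ref{monoidal structure on witt}) to construct an explicit $(A \otimes_k B)$-linear retraction $\tau \colon F_*(W_n(A \otimes_k B)/p) \to A \otimes_k B$ from the given $F$-splitting $\phi \colon F_*A \to A$ and the given $n$-quasi-$F$-splitting $\psi \colon F_*(W_n B/p) \to B$.

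Theorem \ref{monoidal structure on witt} yields $W(A \otimes_k B) \simeq W(A) \widehat{\boxtimes}_W W(B)$. Passing to the truncation at $V^n$, this gives a presentation of $W_n(A \otimes_k B)$ as a quotient of $\bigoplus_{k=0}^{n-1} (W_n A \otimes_{W_n} W_n B) V^k$ by the Antieau--Nikolaus relations $(x \otimes Vy)V^k \sim (Fx \otimes y)V^{k+1}$ and $(Vx \otimes y)V^k \sim (x \otimes Fy)V^{k+1}$, together with the Witt identity $p = VF$. On such representatives I would try the formula
\[
\tau\bigl((x \otimes y) V^k\bigr) \;=\; \phi^{k+1}\bigl(R(x)\bigr) \otimes \psi\bigl(V^k y \bmod p\bigr),
\]
where $R \colon W_n A \to A$ is the restriction; the exponent $k+1$ on $\phi$ is precisely what is needed to absorb the $F^k$-twist carried by $V^k$.

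I would then verify four compatibilities: (a) descent to the two $\sim$-relations, using $\phi \circ F_A = \mathrm{id}_A$ together with $R(Fx) = R(x)^p$ for the first (so both sides give $\phi^{k+1}(R(x)) \otimes \psi(V^{k+1}y)$), and using $R(Vx) = 0$ together with $\psi(V \cdot F(\cdot)) = \psi(p \cdot (\cdot)) \equiv 0 \bmod p$ for the second (so both sides vanish); (b) descent modulo $p$, via $R(p \cdot x) = 0$; (c) $(A \otimes_k B)$-linearity in the $F_*$-sense, which by the Teichm\"uller multiplication formulas reduces to the iterated $F$-splitting identity $\phi^{k+1}(c^{p^{k+1}} x) = c\, \phi^{k+1}(x)$ on the $A$-side and the $B$-linearity $\psi([c]^p z) = c \psi(z)$ of $\psi$ on the $B$-side; and (d) the retraction identity $\tau \circ F = \mathrm{id}$, which is immediate on the Teichm\"uller class $[a \otimes b]^p = V^0([a^p] \otimes [b^p])$, yielding $\phi(a^p) \otimes \psi([b]^p) = a \otimes b$.

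The main obstacle is guessing the correct formula: naive choices (for instance $\phi \circ R$ alone, which works for $k=0$) fail to be $(A \otimes_k B)$-linear at higher $V$-levels, and the delicate balance is that the $V^k$-twist forces exactly $k+1$ iterations of $\phi$ on the $A$-side. Once the formula is fixed, each of the identities above is a short Witt-vector calculation; the indispensable role of Theorem \ref{monoidal structure on witt} is that without it one has no handle at all on $W_n(A \otimes_k B)$ in terms of $W_n A$ and $W_n B$ separately.
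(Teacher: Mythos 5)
Your proposal is essentially the paper's own proof: the paper also invokes the Antieau--Nikolaus isomorphism $W(A)\widehat{\boxtimes}_W W(B)\simeq W(A\otimes_k B)$, truncates at $V^n$, and defines the splitting by exactly your formula $\sigma\bigl((x\otimes y)V^k\bigr)=\sigma_A^{k+1}(Rx)\otimes\sigma_B(V^k y)$, with the same verifications (compatibility with the two boxtimes relations via $\sigma_A\circ F=\mathrm{id}$, $R(Vx)=0$ and $VF=p$, plus linearity and the retraction identity). The only point to phrase more carefully is that for non-affine $X,Y$ one does not literally ``reduce to the affine case'' (quasi-$F$-splitting is a global condition); instead, as in the paper, one applies the formula to the restrictions of the given global splittings $\sigma_X,\sigma_Y$ over an affine cover of $X\times_k Y$ and observes that the resulting local splittings glue.
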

\begin{proof}
    We first consider affine case; $X=\Spec(A)$ and $Y=\Spec (B)$ for $k$-algebras $A, B$.
    Let $\sigma_A \colon F_*A \to A$ be a splitting section of $A$ and let $\sigma_B \colon F_*W_nB \to B$ be a quasi-splitting section of $B$.

    Since $W(A) \widehat{\boxtimes} W(B)/V^n = W(A)\boxtimes W(B)/V^n$, it is enough to construct a morphism
\[
\sigma \colon W(A)\otimes W(B)[V] \to A \otimes B
\]
which satisfies
\begin{align*}
   & \sigma ((a \otimes Vb)V^k)=\sigma ((Fa \otimes b)V^{k+1}), \\
    &\sigma ((Va \otimes b)V^k)=\sigma ((a \otimes Fb)V^{k+1}),\\
    &\sigma ((a \otimes b)V^{m})=0
\end{align*}
for any $a \in W(A)$, $b \in W(B)$, $k \geq 0$ and $m > n$.
We set
\[
\sigma ((a \otimes b)V^k):=\sigma_A^{k+1}(Ra) \otimes \sigma_B(V^kb).
\]
Here we denote the composition $W(B) \to W_n(B) \xrightarrow{\sigma_B} B$ by the same symbol $\sigma_B$.
It is straight to check this $\sigma$ satisfies all the desired properties.

For general $k$-schemes $X, Y$, we start with global splittings $\sigma_X \colon F_*\O_X \to \O_X$ and $\sigma_Y \colon F_*W_n\O_Y \to \O_Y$.
Take affine open coverings $X=\bigcup_iU_i$ and $Y=\bigcup_j V_j$.
Then the local affine splittings $\sigma_{U_i\times V_j}$ constructed as above clearly glue to a global splitting $\sigma_{X \times Y}\colon F_*W_n\O_{X\times Y} \to \O_{X\times Y}$.
\end{proof}

For the converse direction, we prove a local statement and a global one.
\begin{theorem}\label{factors of quasi f split}
    Let $k$ be a perfect field of characteristic $p>0$ and let $X, Y$ be schemes over $k$.
    Assume that $X$ and $Y$ satisfy one of the following conditions;
    \begin{enumerate}
        \item $X=\Spec(A)$ and $Y=\Spec(B)$ where $A$ and $B$ are $F$-finite noetherian $k$-algebras,
        \item $X$ and $Y$ are geometrically connected proper schemes over $k$.
    \end{enumerate}
    If $X \times_k Y$ is quasi-$F$-split, then one of the factors is $F$-split and the other is quasi-$F$-split.
\end{theorem}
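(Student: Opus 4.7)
The plan is to exploit the Witt identity $V(x)V(y) = pV(xy)$, as the author highlights. This identity implies that inside $W_n(R)/p$ for any $\F_p$-algebra $R$, the image $VW_n(R)/pW_n(R)$ is a square-zero ideal, so $W_n(R)/p$ is a square-zero extension of $R \simeq W_n(R)/(VW_n(R)+pW_n(R))$ by a module on which $V$-products vanish. This structural observation drives both cases.

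For case (a), suppose $\sigma\colon W_n(A\otimes_k B)/p \to A\otimes_k B$ is an $(A\otimes B)$-linear splitting of $F$. The natural ring maps $W_n(A),\,W_n(B) \hookrightarrow W_n(A\otimes_k B)$ restrict $\sigma$ to $\sigma_A\colon W_n(A)/p \to A\otimes B$ and $\sigma_B\colon W_n(B)/p \to A\otimes B$. I would then exploit $F$-finiteness to select ``scalar components'' of these restrictions landing in $A$ (respectively $B$); this already shows both factors are $n$-quasi-$F$-split. The crucial step is the Verschiebung identity applied in $W_n(A\otimes B)/p$: for $x \in VW_n(A)$ and $y \in VW_n(B)$, the product $xy$ equals $pV(\cdot)$, hence $\sigma$ kills it. Under the contrapositive assumption that neither $A$ nor $B$ is $F$-split, the extracted scalar components of $\sigma_A$ and $\sigma_B$ would both need to make essential use of their respective $V$-parts; the vanishing $\sigma(xy)=0$ would then produce a contradiction, forcing at least one factor to be honestly $F$-split.

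For case (b), I would apply Proposition \ref{quasi-f-split criterion}: with $d = d_X + d_Y$, the scheme $X\times Y$ is $n$-quasi-$F$-split iff
$$ F\colon H^{d}(X\times Y, \omega_{X\times Y}) \to H^{d}(X\times Y, \omega_{X\times Y} \otimes F_*W_n\O_{X\times Y}/p) $$
is injective. Künneth identifies the source with $H^{d_X}(X,\omega_X)\otimes_k H^{d_Y}(Y,\omega_Y)$. Combining this with the square-zero extension structure of $W_n\O/p$ and a Künneth-type analysis of $VW_n\O_{X\times Y}/p$ in terms of $\O_X\boxtimes\O_Y$, the injectivity of $F$ on the tensor product decomposes in a way that forces one of the factor maps $H^{d_i}(X_i,\omega_{X_i}) \to H^{d_i}(X_i, \omega_{X_i}\otimes F_*\O_{X_i})$ to be injective, i.e., one of $X, Y$ to be $F$-split (by the $n=1$ version of the same criterion).

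The main obstacle is the affine case: cleanly converting the vanishing $V(x)V(y)\equiv 0\pmod p$ into an honest $F$-splitting. The $F$-finiteness hypothesis is used to extract the ``scalar components'' of $\sigma_A, \sigma_B$ that descend to the individual factors, and the Verschiebung identity, applied to the interaction of $V$-parts in $W_n(A)$ and $W_n(B)$ inside $W_n(A\otimes B)$, is what collapses one such component to a genuine splitting of Frobenius; making this bookkeeping precise is where the real work lies.
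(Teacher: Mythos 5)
Your overall strategy (contrapositive plus the identity $V(x)V(y)=pV(xy)$, i.e.\ the image of $V$ is a square-zero ideal of $\overline{W}_n$) is the right starting point, and your sketch of case (b) via Proposition \ref{quasi-f-split criterion} and K\"unneth is close in spirit to the actual argument. But in the affine case there is a genuine gap: you never convert the hypothesis ``$A$ is not $F$-split'' (a purely negative statement: \emph{no} splitting exists) into a positive witness that can interact with the product. Your plan is to restrict a hypothetical splitting $\sigma$ of $W_n(A\otimes_k B)/p \to A\otimes_k B$ to the factors, argue that the resulting maps ``make essential use of their $V$-parts,'' and contradict $\sigma\bigl(V(x)V(y)\bigr)=0$. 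This does not close: $\sigma$ is only $(A\otimes_k B)$-linear, not multiplicative, so knowing that the restriction $\sigma_A$ is nonzero on $V W_{n-1}(A)/p$ and $\sigma_B$ is nonzero on $V W_{n-1}(B)/p$ says nothing about the value of $\sigma$ on products $V(a)V(b)$; the vanishing of $\sigma$ on such products is automatic and contradicts nothing. You also misplace where $F$-finiteness enters: it is not needed to ``extract scalar components'' (the projections $A\otimes_k B\to A$, $\to B$ come from choosing $k$-bases, and already give quasi-$F$-splitting of each factor), but it is essential for the step you left open.

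The device the paper uses to supply the missing witness is purity: for $F$-finite noetherian rings, $F$-splitting is equivalent to purity of $F\colon A\to F_*A$ (Hochster--Roberts), so non-$F$-splitness of $A$ yields a module $M_A$ and a nonzero $x\in M_A$ killed by $F\otimes \mathrm{id}_{M_A}$ after passing to $F_*A$; via the exact sequence $F_*^2\overline{W}_{n-1}(A)\xrightarrow{V} F_*\overline{W}_n(A)\xrightarrow{R} F_*A\to 0$ this means $F(x)$ lies in the image of $V$. Doing the same for $B$ and computing $F(x\otimes y)=F(x)\otimes F(y)$ in $(M_A\otimes_k M_B)\otimes F_*\overline{W}_n(A\otimes_k B)$, the identity $V(a_i)V(b_j)=pV(a_ib_j)=0$ kills the class, so $F\colon A\otimes_k B\to F_*\overline{W}_n(A\otimes_k B)$ is not pure, hence not split. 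An analogous concretization is needed in case (b): there, non-$F$-splitness forces $F(\eta_X)$ and $F(\eta_Y)$ to be representable by \v{C}ech cocycles with components in the image of $V$, and the product cocycle representing $F(\eta_{X\times Y})$ then vanishes termwise mod $p$; your phrase ``decomposes in a way that forces one of the factor maps to be injective'' asserts the conclusion rather than proving it. So the proposal identifies the correct key identity but, as written, the decisive step in case (a) (and the justification in case (b)) is missing.
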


\begin{proof}
Since the canonical maps $\O_X \to pr_{1*}\O_{X \times Y}$ and $\O_Y \to pr_{2*}\O_{X\times Y}$ split, a quasi-$F$-splitting of $X\times Y$ implies the ones of $X$ and $Y$ by \cite[Proposition 3.4 (1)]{yobuko20}.
Hence it is enough to show Lemma \ref{double-nonfsplit-nonquasifsplit}
\end{proof}

\begin{lemma}\label{double-nonfsplit-nonquasifsplit}
    Assume that $X$ and $Y$ satisfy one of the following conditions;
    \begin{enumerate}
        \item $X=\Spec(A)$ and $Y=\Spec(B)$ where $A$ and $B$ are $F$-finite noetherian $k$-algebras,
        \item $X$ and $Y$ are geometrically connected proper schemes over $k$.
    \end{enumerate}
    If both $X$ and $Y$ are not $F$-split, then $X \times_k Y$ is not quasi-$F$-split.
\end{lemma}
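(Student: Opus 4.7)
The plan is to use the cohomological criterion (Proposition~\ref{quasi-f-split criterion}) to detect quasi-$F$-splitting via the injectivity of a Frobenius map on top cohomology, and to exhibit an explicit kernel element by exploiting the identity $V(x)V(y) = pV(xy)$ in the Witt ring, which becomes zero modulo $p$. I will spell out the geometrically connected proper case~(b); case~(a) follows by the same strategy after replacing global cohomology with local cohomology at a closed point.

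First I would pick witnesses. Since $X$ is not $F$-split (i.e.\ not $1$-quasi-$F$-split), Proposition~\ref{quasi-f-split criterion} produces a nonzero $\alpha \in H^{d_X}(X, \omega_X)$ killed by $F \colon H^{d_X}(X,\omega_X) \to H^{d_X}(X, \omega_X \otimes F_*\mathcal{O}_X)$, and similarly a nonzero $\beta \in H^{d_Y}(Y,\omega_Y)$ killed by the analogous map on $Y$. By the Künneth formula in top degree, $\alpha \otimes \beta$ is a nonzero class in $H^{d}(X \times Y, \omega_{X \times Y})$, where $d = d_X + d_Y$. It suffices to show that for every $n \geq 1$ this class lies in the kernel of
\[
F^{(n)} \colon H^d(X \times Y, \omega_{X \times Y}) \to H^d(X \times Y, \omega_{X \times Y} \otimes F_*W_n\mathcal{O}_{X \times Y}/p),
\]
which by Proposition~\ref{quasi-f-split criterion} confirms that $X \times Y$ is not $n$-quasi-$F$-split.

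The central observation is the factorization
\[
F^{(n)}_{X \times Y} \;=\; \mu \circ \bigl(F^{(n)}_X \boxtimes F^{(n)}_Y\bigr),
\]
where $\mu$ is induced by the Antieau--Nikolaus multiplication $W_n\mathcal{O}_X \otimes_{W_n(k)} W_n\mathcal{O}_Y \to W_n\mathcal{O}_{X \times Y}$; this is forced on elementary tensors by Teichmüller multiplicativity $[a \otimes b]^p = [a]^p \cdot [b]^p$, and both sides are $\mathcal{O}_{X \times Y}$-linear (in the $F_*$ sense). Next, tensoring the short exact sequence $0 \to V(W_{n-1}\mathcal{O}_X)/p \to W_n\mathcal{O}_X/p \to \mathcal{O}_X \to 0$ with the invertible sheaf $\omega_X$ preserves exactness, and the resulting long exact sequence together with $F(\alpha) = 0$ (and the vanishing $H^{d_X + 1} = 0$) produces a lift $\widetilde{\alpha} \in H^{d_X}(X, \omega_X \otimes F_* V(W_{n-1}\mathcal{O}_X)/p)$ of $F^{(n)}_X(\alpha)$. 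Choose $\widetilde{\beta}$ similarly, so that by Künneth $\widetilde\alpha \otimes \widetilde\beta$ lifts $F^{(n)}_X(\alpha) \otimes F^{(n)}_Y(\beta)$ into the top cohomology of the external product of the $V$-subsheaves.

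Finally, restricted to $V(W_{n-1}\mathcal{O}_X) \boxtimes V(W_{n-1}\mathcal{O}_Y)$ the map $\mu$ sends $V(x) \otimes V(y)$ to $V(x)\cdot V(y) = p V(xy)$, which vanishes modulo $p$. Hence $\mu_*(\widetilde\alpha \otimes \widetilde\beta) = 0$, and chasing through the factorization gives $F^{(n)}(\alpha \otimes \beta) = 0$, as required. For case~(a), the same argument applies with local cohomology at a pair of maximal ideals $(\mathfrak{m}, \mathfrak{n})$ using a local-duality analogue of Proposition~\ref{quasi-f-split criterion} afforded by the existence of a dualizing complex in the $F$-finite Noetherian setting. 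The main technical point I expect to require care is verifying that the Antieau--Nikolaus comparison is strictly compatible, modulo $p$, with the $V$-filtration, so that the diagrams of short exact sequences and of Künneth decompositions commute on the nose; once this is in hand the vanishing $V(x)V(y) \equiv 0 \pmod p$ does all the work.
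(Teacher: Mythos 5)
Your treatment of case (b) is essentially the paper's own proof, rewritten functorially instead of with \v{C}ech cocycles: both arguments use Proposition \ref{quasi-f-split criterion} to reduce to killing the K\"unneth generator, push $F^{(n)}_X(\alpha)$ and $F^{(n)}_Y(\beta)$ into the image of $V$ (possible by right-exactness in top degree, since $F(\alpha)=F(\beta)=0$), multiply via the canonical map $F_*\overline{W}_n\O_X\boxtimes F_*\overline{W}_n\O_Y\to F_*\overline{W}_n\O_{X\times Y}$, and conclude with $V(x)V(y)=pV(xy)\equiv 0 \pmod p$. Two adjustments there: $\omega_X$ need not be invertible (the schemes are merely proper), but you only need right-exactness of $-\otimes\omega_X$ together with the vanishing of $H^{d_X+1}$, so the lift $\widetilde{\alpha}$ still exists; and your worry about a ``strict compatibility of the Antieau--Nikolaus comparison with the $V$-filtration'' is misplaced --- no input from \cite{antieau-nikolaus} is needed here, since $\mu$ is just the map obtained from functoriality of $W_n$ under the two projections followed by ring multiplication, and its interaction with $V$ is exactly the elementary identity you quote. (Antieau--Nikolaus is needed only for the positive direction, Theorem \ref{fsplit-times-quasifsplit}.)

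Case (a) is where you genuinely diverge from the paper, and where your proposal is still a sketch. The paper avoids duality altogether: by Hochster--Roberts, for $F$-finite noetherian rings splitting of $F\colon A\to F_*\overline{W}_n(A)$ is equivalent to purity, so non-$F$-splitness of $A$ yields some $A$-module $M_A$ and a nonzero $x$ with $F(x)$ in the image of $V$ inside $M_A\otimes_A F_*\overline{W}_n(A)$; the same $V\cdot V\equiv 0$ computation on $M_A\otimes_k M_B$ then shows $F$ is not pure, hence not split, for $A\otimes_k B$. Your local-duality route can be made to work (the paper's remark after the proof notes that for local $A$ one may take $M_A=H^{\dim A}_{\mathfrak m}(\omega_A)$), but as written it leaves real work undone: you must reduce from an arbitrary $F$-finite noetherian $A$ to a (complete) local one, i.e.\ show that non-$F$-splitness localizes to some maximal ideal and that quasi-$F$-splitness of $A\otimes_k B$ passes to the relevant localization; and you must state and prove the local-duality analogue of Proposition \ref{quasi-f-split criterion} (existence of a dualizing complex for $F$-finite noetherian rings, Matlis duality applied to the finite module $F_*\overline{W}_n(A)$, splitting checked after completion), together with a K\"unneth statement for local cohomology and for canonical modules of $A\otimes_k B$. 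None of these steps should fail, but they are precisely the content of case (a), and the purity argument the paper uses reaches the same conclusion with far less machinery.
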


\begin{proof}
    Fix an integer $n >0$ and we will prove that $X \times Y$ is not $n$-quasi-$F$-split.
    First consider the case (1).
    We want to show $F \colon A\otimes_k B \to F_*\overline{W}_n(A \otimes_k B)$ does not split.
Note that, under the assumption, splitting is equivalent to purity (:=injective after tensoring any module) by \cite[Corollary 5.2]{hochsterroberts76}.

Since $A$ is not $F$-split, there is an $A$-module $M_A$ such that $F \colon M_A \to M_A \otimes_A (F_*A)$ is not injective.
Note that this map factors through as
\[
M_A \xrightarrow{F} M_A \otimes_A (F_*\overline{W}_n(A)) \xrightarrow{id \otimes R} M_A \otimes_A (F_*A).
\]
Since we have an exact sequence
\[
M_A \otimes_A (F_*^2\overline{W}_{n-1}(A)) \xrightarrow{id \otimes V} M_A \otimes_A (F_*\overline{W}_n(A))\xrightarrow{id \otimes R} M_A \otimes_A (F_*A) \to 0,
\]
there is a nonzero element $x_A \in M_A$ such that
\[
F(x_A)= \sum_i x_i \otimes V(a_i) \in M_A \otimes_A (F_*\overline{W}_n(A))
\]
for some $x_i \in M_A$ and $a_i \in \overline{W}_{n-1}(A)$.
Similarly, there is a $B$-module $M_B$ and a nonzero element $y \in M_B$ such that
\[
F(y)= \sum_jy_j \otimes V(b_j) \in M_B \otimes_B (F_*\overline{W}_n(B))
\]
for some $y_j \in M_B$ and $b_j \in \overline{W}_{n-1}(B)$.

Consider an $A\otimes_k B$-module $M_A \otimes_k M_B$ and its Frobenius $F \colon M_A \otimes_k M_B \to (M_A \otimes_k M_B)\otimes_{A\otimes B}F_*\overline{W}_n(A\otimes_k B)$.
This map factors as
\[
\xymatrix{
M_A \otimes_k M_B \ar[r]^-{F} \ar[rd]_-{F\otimes F}&(M_A \otimes_k M_B)\otimes_{A\otimes B}F_*\overline{W}_n(A\otimes_k B)\\
& (M_A\otimes_AF_*\overline{W}_n(A))\otimes_k(M_B\otimes_BF_*\overline{W}_n(B)).\ar[u]
}
\]
Clearly $x \otimes y \in M_A\otimes M_B$ is nonzero and this element is mapped as
\begin{align*}
F(x\otimes y)&=F(x)\otimes F(y)\\
&=(\sum_i x_i \otimes V(a_i))\otimes (\sum_jy_j \otimes V(b_j))\\
&=\sum_{i, j} (x_i\otimes y_j)\otimes V(a_i)V(b_j)\\
&=\sum_{i, j} (x_i\otimes y_j)\otimes pV(a_ib_j)=0
\end{align*}
Hence the map $M_A \otimes_k M_B \to (M_A \otimes_k M_B)\otimes_{A\otimes B}F_*\overline{W}_n(A\otimes_k B)$ is not injective.

Next consider the case (2).
Let $d$ (resp. $e$) be the dimension of $X$ (resp. $Y$).
We will show that
\[
F \colon H^{d+e}(X \times Y, \omega_{X \times Y}) \to H^{d+e}(X \times Y, \omega_{X \times Y}\otimes F_*\overline{W}_n\O_{X \times Y})
\]
is not injective.
By the K\"{u}nneth formula, this map factors as
\[
H^d(\omega_X) \otimes_k H^e(\omega_Y) \to H^d(\omega_X \otimes F_*\overline{W}_n\O_X) \otimes_k H^e(\omega_Y \otimes F_*\overline{W}_n\O_Y) \to H^{d+e}(\omega_{X \times Y}\otimes F_*\overline{W}_n\O_{X \times Y})
\]
Let $\eta_X$ (resp. $\eta_Y$) be a generator of $H^d(\omega_X)$ (resp. $H^e(\omega_Y)$).
By the assumption, as in the affine case, $F(\eta_X) \in H^d(\omega_X\otimes F_*\overline{W}_n\O_X)$ (resp. $F(\eta_Y) \in H^e(\omega_Y \otimes F_*\overline{W}_n\O_Y)$) is in the image of $V \colon H^d(\omega_X\otimes F_*\overline{W}_n\O_X) \to H^d(\omega_X\otimes F_*^2\overline{W}_{n-1}\O_X)$ (resp. $V \colon H^e(\omega_Y\otimes F_*\overline{W}_n\O_Y) \to H^e(\omega_Y\otimes F_*^2\overline{W}_{n-1}\O_Y)$).

Now consider affine open coverings $X =\bigcup_iU_i$ and $Y=\bigcup_jV_j$.
Then $X \times Y$ has an induced affine covering $X \times Y = \bigcup_{(i, j)}U_i \times V_j$.
Then the class $F(\eta_X)$ is represented by a cocycle of the form
\[
\{\Tilde{\eta}_{X, i_0\cdots i_d}\}_{i_0\cdots i_d} \in \bigoplus_{i_0\cdots i_d}\Gamma (U_{i_0 \cdots i_d}, \omega_X \otimes F_*\overline{W}_n\O_X)
\]
with 
\[
\Tilde{\eta}_{X, i_0\cdots i_d}=\sum_{\alpha}\Tilde{\eta}_{X, i_0\cdots i_d, \alpha}\otimes V(f_{i_0\cdots i_d, \alpha}).
\]
where
\[
\Tilde{\eta}_{X, i_0\cdots i_d, \alpha} \in \Gamma(U_{i_0 \cdots i_d}, \omega_X)~~\text{and}~~f_{i_0\cdots i_d, \alpha} \in \Gamma (U_{i_0 \cdots i_d}, F_*^2\overline{W}_{n-1}\O_X).
\]
Similarly, $F(\eta_Y)$ is represented by a cocyle $\{\Tilde{\eta}_{Y, j_0\cdots j_e}\}$ of the form
\[
\Tilde{\eta}_{Y, j_0\cdots j_e}=\sum_{\beta}\Tilde{\eta}_{Y, j_0\cdots j_e, \beta}\otimes V(g_{j_0\cdots j_e, \beta}).
\]
The class $\eta_{X \times Y}=\eta_X \otimes \eta_Y$ is a generator of $H^{d+e}(\omega_{X \times Y})$ and then $F(\eta_{X\times Y})=F(\eta_X) F(\eta_Y)$ is represented by a $(d+e)$-cocycle (who lives in $\bigoplus_{(i_0, j_0) \cdots (i_{d+e}, j_{d+e})} \Gamma (U_{i_0 \cdots i_{d+e}}\times V_{j_0 \cdots j_{d+e}}, \omega_{X \times Y} \otimes F_*\overline{W}_n\O_{X \times Y} )$) whose value on $U_{i_0 \cdots i_{d+e}} \times V_{j_0 \cdots j_{d+e}}$ is 
\[
\Tilde{\eta}_{X, i_0\cdots i_d}\Tilde{\eta}_{Y, j_d \cdots j_{d+e}},
\]
which is $0$ modulo $p$ by exactly the same computation as in the affine case.
\end{proof}

\begin{remark}
    When $A$ is a local ring with the maximal ideal $\mathfrak{m}$ and the canonical module $\omega_A$, the local cohomology $H_{\mathfrak{m}}^{\dim A}(\omega_A)$ does the job of $M_A$ in the proof.
\end{remark}

\section{Applications}
In this section, we give applications of Theorem \ref{factors of quasi f split}.
\subsection{Klt Fano variety}
Recall the following relation between (log) Fano varieties and $F$-splitting.
\begin{theorem}\label{fano-and-f-split}
    \begin{enumerate}
        \item (\cite{watanabe91f}) Let k be an algebraically closed field of characteristic $p>5$.
    Let $(\P^1, \Delta)$ be a one dimensional log Fano pair with standard coefficients.
    Then $(\P^1, \Delta)$ is $F$-split.
        \item (\cite{CTW15a}) For any prime $p$, there is a klt del Pezzo surface over $\overline{\F}_p$ which is not $F$-split.
    \end{enumerate}
\end{theorem}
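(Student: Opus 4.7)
Both items of Theorem \ref{fano-and-f-split} are quotations from the cited references, so there is no new content to prove in the theorem itself; the plan is simply to attribute each part to its source. Watanabe's proof of (1) in \cite{watanabe91f} proceeds by analyzing log Fano pairs $(\P^1, \sum(1-1/m_i)P_i)$ with standard coefficients: the ampleness condition $\sum(1-1/m_i)<2$ restricts the admissible tuples $(m_i)$ to a short list of platonic type, and on each of these a Fedder-type calculation produces an explicit $F$-splitting once $p>5$. For (2), Cascini--Tanaka--Witaszek construct klt del Pezzo surfaces as explicit contractions of specific rational surfaces, and they verify that the Frobenius acts as zero on $H^2(X,\omega_X)$, which via the Serre-duality criterion obstructs $F$-splitting. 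Both arguments will be invoked by citation.

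The reason these recollections are recorded at this point in the paper is to set up the corollary promised in the introduction: for any $p$ there is a klt Fano fourfold over $\overline{\F}_p$ that is not quasi-$F$-split. The plan for that deduction is very short. Let $X$ be a CTW surface from (2); then $X$ is geometrically connected, proper, and not $F$-split, so Lemma \ref{double-nonfsplit-nonquasifsplit}(2) applied with $Y=X$ gives that $X \times_{\overline{\F}_p} X$ is not quasi-$F$-split. Fanoness of the product is automatic from $-K_{X \times X} \simeq \mathrm{pr}_1^*(-K_X) + \mathrm{pr}_2^*(-K_X)$ being ample, and klt is preserved under products over a perfect field via a standard log-resolution discrepancy computation on each factor.

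The main (and essentially only) ingredient beyond citation is the invocation of the proper, geometrically connected case of Lemma \ref{double-nonfsplit-nonquasifsplit}, which has already been established earlier in the paper. I do not anticipate any serious obstacle: the klt-Fano product verification is routine, and the failure of quasi-$F$-splitting is an immediate application of the lemma. The entire weight of the fourfold corollary is therefore carried by the product-trick machinery developed in the previous section.
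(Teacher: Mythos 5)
Your proposal matches the paper exactly: Theorem \ref{fano-and-f-split} is stated there purely as a recollection of Watanabe and Cascini--Tanaka--Witaszek, with no proof beyond the citations, which is all you do. Your sketch of the fourfold application also mirrors the paper's subsequent argument (take a non-$F$-split klt del Pezzo and apply Theorem \ref{double-nonfsplit-nonquasifsplit} to its self product), the only detail the paper adds being a citation of Boissi\`ere--Gabber--Serman for $\Q$-factoriality of the product rather than your discrepancy-computation remark.
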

Note that when $p \in \{2, 3, 5\}$, there are non-$F$-split one dimensional log Fano pairs.
The notion of quasi-$F$-splitting remedies the situation:
\begin{theorem}Let k be an algebraically closed field of characteristic $p>0$.
\begin{enumerate}
    \item (\cite[Corollary 5.16]{kttwyy1})
    Let $(\P^1, \Delta)$ be a one dimensional log Fano pair.
    Then $(\P^1, \Delta)$ is quasi-$F$-split.
    \item (\cite[Theorem C]{kttwyy2}) Assume that $p >42$.
    Let $(X, \Delta)$ be a log del Pezzo pair with standard coefficients.
    Then $(X, \Delta)$ is quasi-$F$-split.
\end{enumerate}
\end{theorem}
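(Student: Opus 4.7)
The plan is to verify a logarithmic version of Proposition \ref{quasi-f-split criterion}. For a projective log pair $(X,\Delta)$ of dimension $d$, one expects the pair to be $n$-quasi-$F$-split if and only if the natural Frobenius map
\[
H^d(X, \omega_X) \to H^d\left(X,\ \omega_X \otimes F_*^n \overline{W}_n\O_X(\lceil (p^n-1)\Delta\rceil)\right)
\]
is injective. The log Fano hypothesis provides positivity of $-K_X-\Delta$, which is what makes the target group amenable to control and vanishing arguments. Since both items in the theorem are cited from \cite{kttwyy1} and \cite{kttwyy2}, what follows is a strategic outline of how one would attempt each, rather than a rederivation.

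For (1), I would carry out the calculation on $\P^1$ directly. Under the standard-coefficient and log-Fano assumptions, $\Delta=\sum(1-1/n_i)P_i$ with $\deg\Delta<2$ falls into a short orbifold list: $(n_1,n_2)$, $(2,2,n)$, $(2,3,3)$, $(2,3,4)$, $(2,3,5)$, together with degenerate cases with fewer points. For each, the group $H^1(\P^1, \omega_{\P^1}(\lceil(p^n-1)\Delta\rceil))$ is computable by \v{C}ech cohomology on the standard affine cover, and one needs to exhibit an $n$ for which the Frobenius map above does not annihilate a chosen generator. Since $V(x)V(y)=pV(xy)$ in the Witt ring produces many vanishings modulo $p$ (compare the argument in Lemma \ref{double-nonfsplit-nonquasifsplit}), this reduces to a finite computation whose complexity depends only on $p$ and the denominators $n_i$.

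For (2), the natural workflow is: (a) invoke boundedness of log del Pezzo pairs with standard coefficients; (b) apply the Fedder-type criterion of Kawakami-Takamatsu-Yoshikawa \cite{kty22fedder} to reduce quasi-$F$-splitting to the non-vanishing of a specific Frobenius image on a single cohomology class; (c) use boundedness to guarantee that this non-vanishing holds uniformly once $p$ exceeds the denominators and discrepancy bounds arising in the bounded family. The hypothesis $p>42$ should arise as the precise threshold that makes step (c) go through in every case on the bounded list. The main obstacle is step (a): boundedness of log del Pezzo surfaces with standard coefficients is a deep birational-geometric input well beyond the scope of the present note, which is precisely why the paper defers to \cite{kttwyy2} rather than attempting a self-contained proof.
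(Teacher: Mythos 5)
The paper offers no proof of this theorem at all: both items are imported as black boxes from \cite[Corollary 5.16]{kttwyy1} and \cite[Theorem C]{kttwyy2}, so there is no internal argument to compare yours against. What you have written is an outline of how one might reprove the cited results, and taken on its own terms it has concrete gaps. Most importantly, item (1) carries \emph{no} standard-coefficients hypothesis --- the paper even remarks immediately after the statement that none is needed --- so your reduction to the short orbifold list $(n_1,n_2)$, $(2,2,n)$, $(2,3,3)$, $(2,3,4)$, $(2,3,5)$ does not cover the statement: the coefficients of $\Delta$ are arbitrary numbers in $[0,1]$ with $\deg(K_{\P^1}+\Delta)<0$, and there is no finite list of cases. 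Moreover, even in the standard-coefficient case, a ``finite computation whose complexity depends only on $p$ and the $n_i$'' cannot establish a statement quantified over all primes $p$ and all pairs; one needs an argument uniform in $p$ and in the coefficients, which is precisely the content of the cited corollary. Your starting point is also unproved: the displayed logarithmic analogue of Proposition \ref{quasi-f-split criterion} presupposes a definition of quasi-$F$-splitting for pairs via Witt divisorial sheaves $W_n\O_X(D)$ and a compatible duality statement, none of which is set up in this note, and the correct rounding $\lceil(p^n-1)\Delta\rceil$ versus other choices is itself a delicate point in the cited works, not something one may simply ``expect''.

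For item (2), your steps (a)--(c) defer all of the substance: no Fedder-type criterion is formulated (and the criterion of \cite{kty22fedder} applies to explicit local presentations, not directly to a bounded family of pairs in the way sketched), no mechanism is given by which boundedness would yield the required non-vanishing uniformly, and the threshold $p>42$ is asserted to ``arise'' rather than derived; in the cited source it comes from log canonical threshold/discrepancy bounds of the shape $1-\tfrac12-\tfrac13-\tfrac17=\tfrac1{42}$, not from a case check over a bounded list. So nothing in part (2) is actually established. In short: since the paper treats this theorem purely as a citation, the honest answer here is to cite \cite{kttwyy1} and \cite{kttwyy2} as the paper does; if you want a self-contained argument, item (1) would require a uniform computation valid for arbitrary coefficients, and item (2) lies genuinely beyond the methods of this note.
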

\noindent
We do not need to assume $\Delta$ has standard coefficients in (1).
Now we show that, in higher dimension, the situation is as in Theorem \ref{fano-and-f-split}(2).
\begin{theorem}
    Let $k$ be an algebraically closed field of characteristic p.
    Then there exists an klt Fano fourfold which is not quasi-$F$-split.
\end{theorem}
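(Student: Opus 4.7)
The proof is essentially a direct application of the self-product trick announced in the introduction. The plan is as follows.

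\textbf{Construction.} By Theorem \ref{fano-and-f-split}(2), for the given prime $p$ there exists a klt del Pezzo surface $S$ over $\overline{\F}_p$ which is not $F$-split. Set $Z:=S\times_k S$, a smooth-in-codimension-one (in fact, klt) projective fourfold over $\overline{\F}_p$.

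\textbf{Verifying that $Z$ is a klt Fano fourfold.} The dimension is clear: $\dim Z=2+2=4$. For the Fano property, note that
\[
-K_Z \;=\; p_1^{*}(-K_S)\;+\;p_2^{*}(-K_S),
\]
which is ample since each $-K_S$ is ample and the sum of pullbacks of ample divisors under the two projections is ample on the product. For klt-ness, I would appeal to the fact that the product of two klt varieties is klt: picking log resolutions $\widetilde{S}_i\to S$, their product $\widetilde{S}_1\times\widetilde{S}_2\to S\times S$ is a log resolution whose exceptional divisor has simple normal crossings, and the discrepancies of the product are computed (by the standard formula) from the discrepancies of the factors, all of which are strictly greater than $-1$.

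\textbf{Failure of quasi-$F$-splitting.} Since $S$ is geometrically connected and proper over $k=\overline{\F}_p$, and $S$ is not $F$-split, we apply Lemma \ref{double-nonfsplit-nonquasifsplit} with $X=Y=S$ (case (2) of the hypothesis) to conclude that $Z=S\times_k S$ is not quasi-$F$-split. This finishes the proof.

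\textbf{Where the difficulty (if any) lies.} All the new mathematical content has already been done: Cascini--Tanaka--Witaszek produced $S$, and Lemma \ref{double-nonfsplit-nonquasifsplit} rules out quasi-$F$-splitting for a product of two non-$F$-split proper geometrically connected schemes. The only thing to check carefully is that $S\times_k S$ actually qualifies as a klt Fano in the sense one wants; that is a standard computation with products of log resolutions and I expect no genuine obstacle there.
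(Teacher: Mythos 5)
Your proposal is correct and follows essentially the same route as the paper: take the Cascini--Tanaka--Witaszek non-$F$-split klt del Pezzo surface $S$, form $S\times_k S$, and apply Theorem \ref{double-nonfsplit-nonquasifsplit} to rule out quasi-$F$-splitting. The only difference is cosmetic: where you sketch the klt/Fano verification via products of log resolutions, the paper simply records that the self product is $\Q$-factorial by citing Boissi\`ere--Gabber--Serman.
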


\begin{proof}
    By Theorem \ref{fano-and-f-split}(2), we can take a klt del Pezzo over $k$ which is not $F$-split.
    By Theorem \ref{double-nonfsplit-nonquasifsplit}, the self product $X \times X$ is not quasi-$F$-split.
    Note that  $X \times X$ is $\Q$-factorial by \cite{boissiere-gabber-serman}.
\end{proof}

\subsection{Dense $F$-pure reduction}\label{dense-quasi-f-pure-section}

A similar trick gives us the following.
Given a variety or singularity $X$ defined over a field of characteristic zero, take a model defined over $\Z$ and then consider its mod $p$ reductions $X_p$ for various primes $p$.
One can ask a relationship between properties of $X$ and $X_p$'s.
We often consider whether $X$ is log terminal or log canonical or not.

\begin{definition}
    Let $R$ be a finitely generated normal domain over a field $K$ of characteristic zero.
    We say $R$ is \textit{of dense $F$-pure type}(resp. \textit{of dense quasi-$F$-pure type}) if there is a finitely generated $\Z$-subalgebra $A$ of $K$ and a finitely generated flat $A$-algebra $R_A$ such that $R_A\otimes_AK\simeq R$ and $R\otimes_Ak(s)$ is $F$-split (resp. quasi-$F$-split) for every closed point $s$ of a dense subset of $\Spec(A)$.
\end{definition}
\noindent
There is a notion of \textit{strongly-$F$-regular type} and it is known that $X$ is klt if and only if it is of strongly $F$-regular type \cite{takagi04}.

The following conjecture is due to Hara and K.-i. Watanabe.

\leftline{(DFP) Every log canonical singularities in characteristic zero are of dense $F$-pure type.}
\noindent
Using quasi-$F$-splitting, we can consider the following statement:

\leftline{(DQFP) Every log canonical singularities in characteristic zero are of dense quasi-$F$-pure type.}
\noindent
Clearly (DFP) implies (DQFP).
Theorem \ref{factors of quasi f split} implies the converse:
\begin{theorem}\label{dense-quasi-f-pure}
    (DQFP) impies (DFP).
\end{theorem}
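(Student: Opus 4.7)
The plan is to apply the self-product trick developed in Lemma \ref{double-nonfsplit-nonquasifsplit}, in direct analogy with the argument that produced a non-quasi-$F$-split klt Fano fourfold from a non-$F$-split klt del Pezzo surface. Let $R$ be a log canonical singularity defined over a field $K$ of characteristic zero, and assume (DQFP); the goal is to show that $R$ is of dense $F$-pure type. After base changing to $\overline{K}$ (which preserves the log canonical property and commutes with dense $F$-pure reduction) I may assume that $K$ is algebraically closed, so that $R$ is a geometrically normal, geometrically integral finitely generated $K$-algebra.

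Consider $S := R \otimes_K R$. Since $K = \overline{K}$ and $R$ is a normal domain, $S$ is again a finitely generated normal domain over $K$, and $\Spec(S)$ is the product of two log canonical singularities, hence itself log canonical. Applying (DQFP) to $S$, there is a finitely generated $\Z$-subalgebra $A \subset K$, a flat finitely generated $A$-algebra $R_A$ with $R_A \otimes_A K \simeq R$, and a dense set $T$ of closed points $s \in \Spec(A)$ such that the fibre $R_{k(s)} \otimes_{k(s)} R_{k(s)}$ is quasi-$F$-split, where $R_{k(s)} := R_A \otimes_A k(s)$. Here I have arranged, after shrinking $\Spec(A)$ if necessary by a standard spreading-out argument, that a chosen model of $S$ produced by (DQFP) coincides with the self-product $R_A \otimes_A R_A$ of a model of $R$.

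For every $s \in T$, the residue field $k(s)$ is a finite field, and $R_{k(s)}$ is an $F$-finite noetherian $k(s)$-algebra, so Lemma \ref{double-nonfsplit-nonquasifsplit}(1) applies. Its contrapositive says that if $R_{k(s)} \otimes_{k(s)} R_{k(s)}$ is quasi-$F$-split then at least one factor is $F$-split; since the two factors are identical, $R_{k(s)}$ itself is $F$-split. Thus $R_{k(s)}$ is $F$-split for every $s$ in the dense set $T$, which is exactly the assertion that $R$ is of dense $F$-pure type.

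The only technical obstacle is the noetherian-approximation step needed to identify the model of $R \otimes_K R$ furnished by (DQFP) with the self-product of a single model of $R$; this is routine after passing to a suitable localization of $A$. Every other step is a direct substitution of Lemma \ref{double-nonfsplit-nonquasifsplit}(1) into the definitions of dense $F$-pure and dense quasi-$F$-pure type, together with the elementary fact that products of log canonical singularities over an algebraically closed field are log canonical.
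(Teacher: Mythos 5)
Your proof is correct and is precisely the self-product trick the paper intends: the paper states Theorem \ref{dense-quasi-f-pure} as an immediate consequence of Theorem \ref{factors of quasi f split} (in its affine case, via the contrapositive of Lemma \ref{double-nonfsplit-nonquasifsplit}), exactly as you apply it to the reductions of $R\otimes_K R$. The base-change and spreading-out bookkeeping you defer is standard, and the paper itself omits the proof entirely, so your write-up matches (and fleshes out) the intended argument.
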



\subsection{Hilbert scheme of points}

Recall that the theorem by Kumar-Thomsen that the Hilbert scheme of points of an $F$-split quasiprojective smooth surface is also $F$-split if $p>2$  \cite{kumarthomsen}.
The next result says a quasi-$F$-split analogue of this theorem does not hold.

\begin{theorem}\label{quasi-f-split-hilbert}
    Let $X$ be a smooth projective scheme over $k$ and let $n \geq 2$ be an integer.
    Assume that the Hilbert scheme $X^{[n]}$ of $X$ of points of length $n$ is quasi-F-split.
    Then $X$ is $F$ split. 
\end{theorem}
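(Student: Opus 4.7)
The plan is to show that $X^n$ is quasi-$F$-split, at which point Theorem~\ref{double-nonfsplit-nonquasifsplit}(2) applied to $X \times_k X^{n-1}$, combined with the fact that $F$-splitness of a product descends to each factor (as in the proof of Theorem~\ref{factors of quasi f split}), will force $X$ to be $F$-split. The transport from $X^{[n]}$ to $X^n$ goes through the Hilbert-Chow / symmetric product diagram.

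Concretely, writing $\Delta \subset X^n$ for the big diagonal, the Hilbert-Chow morphism $X^{[n]} \to \mathrm{Sym}^n(X) = X^n/S_n$ is an isomorphism over the open locus $V := (X^n \setminus \Delta)/S_n$, so $V$ sits as an open subscheme of $X^{[n]}$ and inherits quasi-$F$-splitting by restriction. The quotient map $U := X^n \setminus \Delta \to V$ is a finite étale $S_n$-Galois cover, and since étale morphisms commute with Frobenius and with $W_m$, the pullback of a quasi-$F$-splitting along an étale map is again a quasi-$F$-splitting; hence $U$ is quasi-$F$-split. Assume now $\dim X \geq 2$. Then $\Delta$ has codimension $\geq 2$ in the smooth variety $X^n$, and $F_*W_m\mathcal{O}_{X^n}/p$ is locally free (since $W_m\mathcal{O}_{X^n}/p$ is an iterated extension of locally free sheaves via its $V$-filtration, and $F$ is finite flat). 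Thus $\mathcal{H}om_{\mathcal{O}_{X^n}}(F_*W_m\mathcal{O}_{X^n}/p,\,\mathcal{O}_{X^n})$ is reflexive, and by Hartogs the splitting section on $U$ extends uniquely to a morphism $\tilde s$ on $X^n$; the relation $\tilde s \circ F = \mathrm{id}_{\mathcal{O}_{X^n}}$, viewed as an equation between global sections of the reflexive sheaf $\mathcal{O}_{X^n}$, holds on the schematically dense open $U$ and therefore globally. So $X^n$ is quasi-$F$-split, and the argument closes by Theorem~\ref{double-nonfsplit-nonquasifsplit}(2) and descent to the factor $X$.

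The main obstacle is the Hartogs extension in the penultimate step, which pins the argument to $\dim X \geq 2$ (so that $\Delta$ has codimension at least two in $X^n$). The one-dimensional case is not addressed by this route: for $X = \P^1$ there is nothing to prove, and if $g(X) \geq 2$ then $\mathrm{Sym}^n(X) = X^{[n]}$ is of general type and hence not quasi-$F$-split by \cite[Proposition 3.14]{kttwyy1}, making the hypothesis vacuous; but the sub-case of a supersingular elliptic curve would require a separate direct computation, presumably through Proposition~\ref{quasi-f-split criterion} together with the $\P^{n-1}$-bundle structure of the sum morphism $\mathrm{Sym}^n(E) \to E$.
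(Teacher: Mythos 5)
Your argument is correct and follows the same overall strategy as the paper's proof: transport the splitting to the configuration-space open subset $U\subset X^n$, extend over the diagonal locus using that it has codimension $\geq 2$, and conclude with Theorem \ref{double-nonfsplit-nonquasifsplit} (plus descent of $F$-splitting to factors). The one step where you diverge is the passage from $X^{[n]}$ to $U$: the paper first descends the quasi-$F$-splitting to all of $X^{(n)}$ via $f_*\O_{X^{[n]}}=\O_{X^{(n)}}$ (normality of $X^{(n)}$ plus birationality of the Hilbert--Chow morphism) and only then pulls back along the \'etale map over the configuration locus, whereas you simply restrict the splitting of $X^{[n]}$ to the open locus $V$ over which Hilbert--Chow is an isomorphism and pull back along the \'etale cover $U\to V$. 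Your variant is a bit more economical and in fact more robust: for $\dim X\geq 3$ the Hilbert scheme $X^{[n]}$ may be reducible and Hilbert--Chow need not be birational, so the paper's pushforward step requires extra care there, while restriction to the isomorphism locus causes no trouble in any dimension. Your explicit reflexivity/Hartogs justification of the extension step is exactly what the paper leaves implicit; note that reflexivity of $\cHom_{\O_{X^n}}(F_*(W_m\O_{X^n}/p),\O_{X^n})$ as a dual of a coherent sheaf on the smooth variety $X^n$ already suffices, so the local freeness claim, while true, is not needed. Both your argument and the paper's require $\dim X\geq 2$ (the partial diagonals have codimension $\dim X$); the paper assumes this silently and you flag it honestly. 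One inaccuracy in your side remark on curves: for $g(X)\geq 2$ and $n\geq 2g-1$ the symmetric product $\mathrm{Sym}^n(X)$ is a projective bundle over the Jacobian, hence not of general type, so the genus $\geq 2$ case is not disposed of by Kodaira dimension alone---but this lies outside the main line of the proof and is equally untreated in the paper.
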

\begin{proof}
    Consider the Hilbert-Chow morphism
    \[
    f \colon X^{[n]} \to X^{(n)}
    \]
     where $X^{(n)}$ is the $n$-th symmetric product of $X$.
     We know that $f_*\O_{X^{[n]}}=\O_{X^{(n)}}$ because the Hilbert-Chow morphism is projective birational and $X^{(n)}$ is normal since it has only quotient singularities. 
     This implies that a quasi-$F$-splitting for $X^{[n]}$ descends to a splitting for $X^{(n)}$.
     Now consider the natural quotient map
     \[
     \pi \colon X^n \to X^{(n)}.
     \]
     Let $U \subset X^n$ be the complement of the “partial” diagonals.
     It is well known that the restriction of $\pi$ to $U$ is \'etale.
     Hence the quasi-$F$-splitting of $X^{(n)}$ can be pullbacked on $U$.
     Since the codimension of $U$ is equal to $\dim X \geq 2$, we can conclude $X^n$ is also quasi-$F$-split.
     By Theorem \ref{factors of quasi f split}, we see that $X$ is $F$-split.
\end{proof}

\bibliographystyle{amsalpha}
\bibliography{references}

\end{document}